\newcommand{\BibTeX}{{\rm B\kern-.05em{\sc
          i\kern-.025emb}\kern-.08em\TeX}}
\newtheorem{thm}{Theorem}[section]
\newtheorem{prop}[thm]{Proposition}
\newtheorem{lemma}[thm]{Lemma}
\newtheoremstyle{definition2}{\topsep}{\topsep}%
     {}
     {}
     {\bfseries}
     {.}
     {.5em}
     {\thmnumber{(#2)}\thmname{ #1}\thmnote{ #3}}
\theoremstyle{definition}
\newtheorem{rem}[thm]{Remark}
\def\ep{\varepsilon}
\def\vphi{\varphi}
\def\R{\mathbb{R}}
\def\K{\mathbb{K}}
\def\K{\mathbb{K}}
\def\hbar{\bar{h}}
\def\dist{{\rm dist}}
\def\leq{\leqslant}
\def\geq{\geqslant}
\definecolor{verde}{rgb}{0,0.35,0.1} 
\definecolor{rosso}{rgb}{0.7,0,0}
\definecolor{blue}{rgb}{0,0,1}
\definecolor{viola}{rgb}{0.6,0,0.4}
\newcommand{\rst}[1]{\ensuremath{{\mathbin |}%
\raise-.5ex\hbox{$#1$}}} 
\begin{document}

\title[]{Increasing powers in a degenerate parabolic logistic equation}
\author{Jos\'e Francisco Rodrigues}
\address{Universidade de Lisboa/CMAF 
Av. Prof. Gama Pinto 2 
1649–003 Lisboa, Portugal}

\email{rodrigue@ptmat.fc.ul.pt }
\thanks{Both authors were supported by Funda\c c\~ao para a Ci\^encia e a Tecnologia (FCT), PEst OE/MAT/UI0209/2011.
}

\author{Hugo Tavares}
\address{Universidade de Lisboa/CMAF 
Av. Prof. Gama Pinto 2 
1649–003 Lisboa, Portugal}
\email{htavares@ptmat.fc.ul.pt}
\thanks{The second author was also supported by the FCT grant SFRH/BPD/69314/201.}

\subjclass[2010]{Primary: 35B40 ; Secondary: 35B09, 35K91}

\date{May 21, 2012}

\keywords{parabolic logistic equation, obstacle problem, positive solutions, increasing powers, sub and supersolutions}

\begin{abstract}
The purpose of this paper is to study the asymptotic behavior of the positive solutions of the problem
$$
\partial_t u-\Delta u=a u-b(x) u^p \text{ in } \Omega\times \R^+,\ u(0)=u_0,\ u(t)|_{\partial \Omega}=0
$$
as $p\to +\infty$, where $\Omega$ is a bounded domain and $b(x)$ is a nonnegative function. We deduce that the limiting configuration solves a parabolic obstacle problem, and afterwards we fully describe its long time behavior.
\end{abstract}

\maketitle

\section{Introduction}

In this paper we are interested in the study of the parabolic problem
\begin{equation} \label{eq:parabolic}
\left\{\begin{array}{rcll}
\partial_t u-\Delta u&=&a u-b(x) u^p & \text{ in } Q:=\Omega \times (0,+\infty)\\
u &=& 0 &\text{ on } \partial\Omega \times (0,+\infty)\\
u(0)&=& u_0 & \text{ in }\Omega
\end{array}
\right.
\end{equation}
where $a>0$, $p>1$, $b\in L^\infty(\Omega)$ is a nonnegative function and $\Omega$ is a bounded domain with smooth boundary. Such system arises in population dynamics, where $u$ denotes the population density of a given specie, subject to a logistic-type law. 

It is well known that under these assumptions and for very general $u_0$'s, \eqref{eq:parabolic} admits a unique global positive solution $u_p=u_p(x,t)$. In fact, in order to deduce the existence result, one can make the change of variables $v=e^{-at}u$, and deduce that $v$ satisfies $\partial_t v-\Delta v+b(x)e^{pat}v^p=0$. As $v\mapsto b(x)e^{pat}|v|^{p-1}v$ is monotone nondecreasing, the theory of monotone operators (cf. \cite{Lions, Zheng}) immediately provides existence of solution for the problem in $v$, and hence also for \eqref{eq:parabolic}.

One of our main interests is the study of the solution $u_p$ as $p\to +\infty$. As we will see, in the limit we will obtain a parabolic obstacle problem, and afterwards we fully describe its asymptotic limit as $t\to +\infty$.

This study is mainly inspired by the works of Dancer et al \cite{DD,DDM1,DDM2}, where the stationary version of \eqref{eq:parabolic} is addressed. Let us describe in detail their results. Consider the elliptic problem
\begin{equation}\label{eq:elliptic}
-\Delta u=a u-b(x) u^p,\qquad u\in H^1_0(\Omega)
\end{equation} 
and, for each domain $\omega\subseteq \R^N$, denote by $\lambda_1(\omega)$ the first eigenvalue of $-\Delta$ in $H^1_0(\omega)$. Assuming $b\in C(\overline \Omega)$, the study is divided in two cases: the so called \emph{nondegenerate} case (where $\min_{\overline \Omega} b(x)>0$) and the \emph{degenerate} one (where $\Omega_0:=\text{int} \{x\in \Omega:\ b(x)=0\}\neq\emptyset$ and has smooth boundary).

In the nondegenerate case, it is standard to check (see for instance \cite[Lemma 3.1 \& Theorem 3.5]{FKLM}) that \eqref{eq:elliptic} has a positive solution if and only if $a>\lambda_1(\Omega)$. For each $a>\lambda_1(\Omega)$ fixed, then in \cite{DDM1} it is shown that $u_p\to w$ in $C^1(\overline \Omega)$ as $p\to +\infty$, where $w$ is the unique solution of the obstacle-type problem
\begin{equation}\label{eq:obstacle_elliptic_0}
-\Delta w=a w \chi_{\{w<1\}},\quad w>0,\qquad w|_{\partial\Omega}=0,\ \|w\|_\infty=1.
\end{equation}
It is observed in \cite{DD} that $u$ is also the unique positive solution of the variational inequality
\begin{equation}\label{eq:obstacle_elliptic_1}
w\in \K:\qquad \int_\Omega \nabla w\cdot \nabla (v-w)\, dx\geq \int_\Omega a w(v-w)\, dx,\quad \forall v\in \K,
\end{equation}
where 
\begin{equation*}
\K=\{w\in H^1_0(\Omega):\ w\leq 1 \text{ a.e. in }\Omega\}.
\end{equation*}
In the degenerate case, on the other hand, problem \eqref{eq:elliptic} has a positive solution if and only if $a\in (\lambda_1(\Omega),\lambda_1(\Omega_0))$. For such $a$'s, assuming that $\Omega_0\Subset \Omega$, if we combine the results in \cite{DDM1,DDM2}, we see that $u_p\to w$ in $L^q(\Omega)$ for every $q\geq 1$, where now $w$ is the unique nontrivial nonnegative solution of
\begin{equation}\label{eq:obstacle_elliptic_2}
v\in \K_0:\qquad \int_\Omega \nabla w\cdot \nabla (v-w)\, dx\geq \int_\Omega a w(v-w)\, dx, \quad \forall v\in \K_0,
\end{equation}
with
\begin{equation*}
\K_0=\{w\in H^1_0(\Omega):\ w\leq 1 \text{ a.e. in }\Omega\setminus \Omega_0\}.
\end{equation*}
The uniqueness result was the subject of the paper \cite{DDM2}. Therefore, whenever $b(x)\neq 0$, the term $b(x)u^p$ strongly penalizes the points where $u_p>1$, forcing the limiting solution to be bellow the obstacle $1$ at such points.

Our first aim is to extend these conclusions for the parabolic case \eqref{eq:parabolic}. While doing this, our concern was also to relax some of the assumptions considered in previous papers, namely the continuity of $b$ as well as the condition of $\Omega_0$ being in the interior of $\Omega$. In view of that, consider the following conditions for $b$:

\begin{itemize}
\item[(b1)] $b\in L^\infty(\Omega)$;
\item[(b2)] there exists $\Omega_0$, an open domain with smooth boundary, such that
$$
b(x)=0\text{ a.e. on }\Omega_0, \text{ and }
$$
$$
\forall\ \Omega'\Subset \Omega\setminus \Omega_0 \text{ open }\exists\ \underline{b}>0 \text{ such that }b(x)\geq \underline{b} \text{ a.e. in }\Omega'.
$$
\end{itemize}
Observe that in (b2) $\Omega_0=\emptyset$ is allowed, and $\overline \Omega_0$ may intersect $\partial \Omega$. Continuous functions with regular nodal sets or characteristic functions of open smooth domains are typical examples of functions satisfying (b1)-(b2). As for the initial data, we consider:

\begin{itemize}
\item[(H1)] $u_0\in H^1_0(\Omega)\cap L^\infty(\Omega)$;
\item[(H2)] $0\leq u_0\leq 1$ a.e. in $\Omega \setminus \Omega_0$.
\end{itemize}

Our first main result is the following.

\begin{thm}\label{thm:3}
Assume that $b$ satisfies (b1)-(b2) and $u_0$ satisfies (H1)-(H2). Then there exists a function $u$ such that, given $T>0$, $u\in L^\infty(0,T; H^1_0(\Omega))\cap H^1(0,T;L^2(\Omega))$ and
\[
\begin{split}
&u_p\to u \qquad \text{strongly in }L^2(0,T;H^1_0(\Omega));\\
&\partial_t u_p\rightharpoonup \partial_t u \qquad \text{ weakly in } L^2(Q_T).
\end{split}
\]
Moreover $u$ is the unique solution of the following problem:
\begin{quote}
$\text{for a.e. }t>0,\ u(t)\in \K_0:$
\begin{equation}\label{eq:limit_problem_2}
\int_\Omega \partial_t u(t) (v-u(t))\, dx+\int_\Omega \nabla u(t)\cdot \nabla (v-u(t))\, dx\geq \int_\Omega a u(t)(v-u(t))\, dx
\end{equation}
for every $v\in \K_0$, with the initial condition $u(0)=u_0$.
\end{quote}
\end{thm}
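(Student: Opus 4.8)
The plan is to obtain the limiting problem \eqref{eq:limit_problem_2} by passing to the limit $p\to+\infty$ in a variational formulation of \eqref{eq:parabolic}, after establishing uniform (in $p$) a priori bounds. First I would derive the estimates: testing \eqref{eq:parabolic} with $u_p$ and with $\partial_t u_p$ and using that $a>0$, $b\ge 0$, one controls $\|u_p\|_{L^\infty(0,T;L^2)}$, $\|\nabla u_p\|_{L^2(Q_T)}$; a comparison argument with the spatially-independent ODE solution of $y'=ay$, $y(0)=\|u_0\|_\infty$ gives $0\le u_p\le e^{at}\|u_0\|_\infty$, hence an $L^\infty(Q_T)$ bound independent of $p$. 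The subtler estimate is that $u_p$ stays essentially below $1$ on $\Omega\setminus\Omega_0$: here one uses (H2) together with a sub/supersolution argument (as announced in the keywords) — on any $\Omega'\Subset\Omega\setminus\Omega_0$ one has $b\ge\underline b>0$, and comparing with the solution of $\partial_t z-\Delta z=az-\underline b z^p$ forces $\limsup_p u_p\le 1$ there; combined with the energy bound this yields $\partial_t u_p$ bounded in $L^2(Q_T)$ uniformly in $p$ and hence, after extracting a subsequence, the stated convergences $u_p\to u$ strongly in $L^2(0,T;H^1_0)$ (via Aubin–Lions) and $\partial_t u_p\rightharpoonup\partial_t u$ weakly in $L^2(Q_T)$, with $u\in\K_0$ for a.e. $t$ and $u(0)=u_0$.

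Next I would identify the limit equation. The natural device is to test the equation for $u_p$ against $v-u_p$ for an arbitrary $v\in\K_0$, which gives
\[
\int_\Omega \partial_t u_p(t)(v-u_p(t))\,dx+\int_\Omega \nabla u_p(t)\cdot\nabla(v-u_p(t))\,dx=\int_\Omega a u_p(t)(v-u_p(t))\,dx-\int_\Omega b(x)u_p(t)^p(v-u_p(t))\,dx.
\]
The key point is to show that the penalization term has the right sign in the limit: $\int_\Omega b u_p^p(v-u_p)\,dx\le \int_\Omega b u_p^p(1-u_p)\,dx$ on $\Omega\setminus\Omega_0$ (since $v\le 1$ there and $bu_p^p\ge 0$), while on $\Omega_0$ it vanishes; and one shows $\int_\Omega b u_p^p(1-u_p)\,dx\to 0$ by splitting into $\{u_p\le 1-\delta\}$ (where $b u_p^p\le \|b\|_\infty(1-\delta)^p\to 0$) and $\{u_p>1-\delta\}$ (where the integrand, using $x^p(1-x)$, is small for $x$ near $1$ after accounting for the uniform $L^\infty$ bound and letting first $p\to\infty$ then $\delta\to 0$). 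Hence $\liminf_p\big(-\int b u_p^p(v-u_p)\big)\ge 0$, so passing to the limit in the displayed identity (using weak convergence of $\partial_t u_p$, strong $H^1$ convergence of $u_p$, and lower semicontinuity where needed) yields the inequality \eqref{eq:limit_problem_2} for a.e.\ $t$.

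Finally, uniqueness: given two solutions $u^1,u^2$ of \eqref{eq:limit_problem_2}, test the inequality for $u^1$ with $v=u^2$ and vice versa, add, and obtain
\[
\frac12\frac{d}{dt}\|u^1(t)-u^2(t)\|_{L^2}^2+\|\nabla(u^1(t)-u^2(t))\|_{L^2}^2\le a\|u^1(t)-u^2(t)\|_{L^2}^2;
\]
since $u^1(0)=u^2(0)=u_0$, Grönwall's inequality gives $u^1=u^2$. I expect the main obstacle to be the uniform-in-$p$ estimate for $\partial_t u_p$ in $L^2(Q_T)$ and the careful control of the penalization term $\int b u_p^p(1-u_p)$ near the obstacle; the a priori $L^\infty$ bound and the sub/supersolution comparison that confines $u_p$ below $1$ on $\Omega\setminus\Omega_0$ are the technical heart, whereas the passage to the limit and the uniqueness are then fairly standard.
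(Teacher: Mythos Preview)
Your overall plan is sound and close to the paper's, but there are two genuine gaps.

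\medskip
\textbf{(1) Strong $L^2(0,T;H^1_0)$ convergence does not follow from Aubin--Lions.} From the bounds in $L^\infty(0,T;H^1_0(\Omega))\cap H^1(0,T;L^2(\Omega))$, Aubin--Lions yields compactness in $C([0,T];L^2(\Omega))$ (hence strong $L^2(Q_T)$), but \emph{not} in $L^2(0,T;H^1_0(\Omega))$. The strong gradient convergence is part of the statement of the theorem and must be proved separately. The paper does this (Lemma~\ref{lemma_aux2}) by testing the equation with $u_p-u$ and showing directly that $\iint_{Q_T}|\nabla(u_p-u)|^2\to 0$; the only delicate term is $\iint b\,u_p^p(u_p-u)$, and there one uses that the \emph{limit} $u$ already satisfies $u\le 1$ a.e.\ on $(\Omega\setminus\Omega_0)\times(0,T)$ to bound $\iint_{\{u_p\le u\}} b\,u_p^p(u_p-u)$. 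In other words, one first proves $u(t)\in\K_0$, then upgrades to strong $H^1$ convergence.

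\medskip
\textbf{(2) Your comparison argument for $u(t)\in\K_0$ does not work as stated.} Comparing $u_p$ on $\Omega'\Subset\Omega\setminus\Omega_0$ with a solution $z$ of $\partial_t z-\Delta z=az-\underline b\,z^p$ requires controlling the lateral boundary data of $z$ on $\partial\Omega'$, where all you know is $u_p\le M(T)$ with $M(T)$ possibly much larger than $1$; with such boundary data $z$ will not stay below $1$ near $\partial\Omega'$, and nothing forces $\limsup_p u_p\le 1$ on $\Omega'$. The paper avoids pointwise comparison altogether: testing \eqref{eq:parabolic} with $u_p$ already gives the integral bound
\[
\iint_{Q_T} b(x)\,u_p^{p+1}\,dx\,dt\le C(T)\qquad\text{uniformly in }p,
\]
and from this one deduces (Lemma~\ref{lemma:key_lemma}) that $|\{u>m\}\cap Q_T'|=0$ for every $m>1$ and every $Q_T'=\Omega'\times(0,T)$, via
\[
\iint_{\{u_p>m\}\cap Q_T'} \underline b\,u_p\,dx\,dt\le \frac{1}{m^p}\iint_{Q_T} b\,u_p^{p+1}\,dx\,dt\le \frac{C}{m^p}\to 0.
\]
This is the ``key lemma'' alluded to in the introduction.

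\medskip
Your treatment of the penalization term $\iint b\,u_p^p(v-u_p)$ by splitting into $\{u_p\le 1-\delta\}$, $\{1-\delta<u_p\le 1\}$ and $\{u_p>1\}$ is actually correct and gives the right sign in the limit; the paper uses instead the trick of testing with $\theta v$ for $\theta<1$ (so that on $\{u_p\le\theta v\}$ one has $u_p\le\theta<1$) and then letting $\theta\to1$. Both work. Your uniqueness argument via Gr\"onwall is exactly the paper's. Finally, note that the paper first derives the time-integrated variational inequality on $Q_T$ and then localizes to a.e.\ $t$ by choosing $\tilde v(t)=v\chi_{[\xi,\xi+h]}+u(t)\chi_{[\xi,\xi+h]^c}$; working directly at fixed $t$ as you do is formally cleaner but requires care since the convergences are only in $L^2(Q_T)$.
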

 
 Next we turn to the long time behavior of the solution of \eqref{eq:limit_problem_2}.
 
 \begin{thm}\label{thm:4}
Suppose the $b$ satisfies (b1)-(b2) and take $u_0$ verifying (H1)-(H2). Fix $a\in (\lambda_1(\Omega),\lambda_1(\Omega_0))$. Let $u$ be the unique positive solution of \eqref{eq:limit_problem_2} and take $w$ the unique nontrivial nonnegative solution of \eqref{eq:obstacle_elliptic_2}. Then $\|w\|_\infty=1$ and
$$
u(t)\to w \qquad \text{ strongly in } H^1_0(\Omega), \text{ as }t\to +\infty.
$$
Moreover if $a<\lambda_1(\Omega)$ then $\|u(t)\|_{H^1_0(\Omega)}\to 0$, and if $a\geq \lambda_1(\Omega_0)$ then both $\|u(t)\|_\infty$ and $\|u(t)\|_{H^1_0(\Omega)}$ go to $+\infty$ as $t\to +\infty$.
\end{thm}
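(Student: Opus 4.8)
The plan is to treat the long-time behavior of $u(t)$, the solution of the parabolic variational inequality \eqref{eq:limit_problem_2}, by exploiting its gradient-flow structure. The inequality \eqref{eq:limit_problem_2} is (formally) the steepest descent for the energy
\[
E(v)=\frac12\int_\Omega |\nabla v|^2\,dx-\frac a2\int_\Omega v^2\,dx,\qquad v\in\K_0,
\]
so along the trajectory one has, for a.e.\ $t$, $\frac{d}{dt}E(u(t))=-\int_\Omega |\partial_t u(t)|^2\,dx\leq 0$ (obtained by taking $v=u(t)\pm\partial_t u(t)\in\K_0$ as an admissible test function, when legitimate, or by a difference-quotient argument). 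Thus $E(u(t))$ is nonincreasing. First I would establish the a priori bounds that split the three regimes: in the case $\lambda_1(\Omega)<a<\lambda_1(\Omega_0)$ one shows the trajectory is bounded in $H^1_0(\Omega)$ and bounded away from $0$, uniformly in $t$; the upper bound comes from comparison with the (bounded) stationary obstacle solution or from the fact that $E$ is coercive on $\K_0$ for $a<\lambda_1(\Omega_0)$ after using the obstacle constraint outside $\Omega_0$ together with the Poincar\'e inequality on $\Omega_0$, and the lower bound follows because a suitable multiple $\varepsilon\varphi_1$ of the first eigenfunction of $\Omega$ is a subsolution whenever $a>\lambda_1(\Omega)$ (this uses (H2) and the comparison/monotonicity principle for \eqref{eq:limit_problem_2}, which should be available from the well-posedness theory underlying Theorem \ref{thm:3}).

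Next, with these bounds in hand, I would run the standard $\omega$-limit set argument. From $\int_0^\infty\|\partial_t u(t)\|_{L^2}^2\,dt=E(u_0)-\lim_{t\to\infty}E(u(t))<\infty$ together with the uniform $H^1$ bound, a subsequence $u(t_n)$ converges weakly in $H^1_0(\Omega)$, and (using the parabolic regularity/compactness already employed in Theorem \ref{thm:3}) strongly in $L^2(\Omega)$, to some $w_\infty\in\K_0$. Passing to the limit in the time-integrated form of \eqref{eq:limit_problem_2} over $[t_n,t_n+1]$, and using that $\partial_t u\to 0$ in $L^2$ on such windows, shows $w_\infty$ solves the stationary variational inequality \eqref{eq:obstacle_elliptic_2}. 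Since $a>\lambda_1(\Omega)$ forces $w_\infty\neq 0$ (by the subsolution lower bound passing to the limit), the uniqueness result of \cite{DDM2} quoted in the introduction gives $w_\infty=w$, the unique nontrivial nonnegative solution; independence of the subsequence then yields $u(t)\to w$ as $t\to\infty$, first weakly in $H^1_0$ and in $L^2$. Upgrading to strong $H^1_0$ convergence I would do by showing $E(u(t))\to E(w)$ — the limit of the (monotone) energy must equal $E(w)$ because along the converging subsequence lower semicontinuity gives $E(w)\leq\liminf E(u(t_n))$ while testing the stationary inequality against $u(t_n)$ controls $\limsup$ — and then $\|\nabla u(t)\|_{L^2}\to\|\nabla w\|_{L^2}$ combined with weak convergence gives strong convergence; the claim $\|w\|_\infty=1$ is inherited from the elliptic theory (\cite{DDM1,DDM2}) or re-proved from \eqref{eq:obstacle_elliptic_2}.

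For the remaining two regimes the arguments are softer. If $a<\lambda_1(\Omega)$, then $E$ is coercive and in fact $E(v)\geq\frac12(1-a/\lambda_1(\Omega))\|\nabla v\|_{L^2}^2\geq 0$; combined with $\int_0^\infty\|\partial_t u\|_{L^2}^2<\infty$ and the same $\omega$-limit analysis, the only nonnegative stationary solution in $\K_0$ is $0$ (here one checks $0$ is the unique solution of \eqref{eq:obstacle_elliptic_2} when $a<\lambda_1(\Omega)$), so $u(t)\to 0$ in $H^1_0(\Omega)$. If $a\geq\lambda_1(\Omega_0)$, I would argue by constructing an unbounded subsolution inside $\Omega_0$: on $\Omega_0$ there is no obstacle, so for $a>\lambda_1(\Omega_0)$ (and, with a little more care, $a=\lambda_1(\Omega_0)$) the function $t\mapsto c\,e^{(a-\lambda_1(\Omega_0))t}\varphi_1^{\Omega_0}$ is a subsolution of the problem restricted to $\Omega_0$ for small $c>0$, and since $u(t)$ need not vanish on $\partial\Omega_0$ the comparison principle forces $\|u(t)\|_{L^\infty(\Omega_0)}\to\infty$, hence $\|u(t)\|_\infty\to\infty$; the $H^1_0(\Omega)$ divergence then follows, e.g.\ from $\|u(t)\|_{H^1_0}\geq c'\|u(t)\|_{L^{2^*}}\to\infty$ or by a direct energy estimate. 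The main obstacle I anticipate is the first regime: making the $\omega$-limit passage to the stationary \emph{inequality} rigorous — in particular justifying that $\partial_t u(t_n)\rightharpoonup 0$ in the right sense and that the limit $w_\infty$ satisfies the constraint and the inequality for \emph{all} test functions $v\in\K_0$ — and cleanly obtaining the uniform-in-time lower bound $\|u(t)\|\geq c>0$ via comparison, since the comparison principle for the degenerate variational inequality \eqref{eq:limit_problem_2} must be invoked carefully given that the obstacle is only active on $\Omega\setminus\Omega_0$.
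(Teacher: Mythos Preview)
Your approach is genuinely different from the paper's, and the difference is instructive. You work \emph{intrinsically} with the limit variational inequality \eqref{eq:limit_problem_2}, using its gradient-flow structure and a direct comparison with $\varepsilon\varphi_1$. The paper instead goes \emph{back to the approximating problems} $u_p$ at every critical step. This is not merely a stylistic choice: it is how the paper avoids precisely the obstacles you flag at the end.

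Two of your steps have real gaps. First, the energy identity $\frac{d}{dt}E(u(t))=-\|\partial_t u(t)\|_2^2$ is not available for free: the tests $v=u(t)\pm\partial_t u(t)$ need not lie in $\K_0$, and the difference-quotient route you mention requires regularity of $\partial_t u$ that you have not established for the obstacle problem. The paper sidesteps this entirely: it proves $\|\partial_t u_p(t)\|_2\to 0$ as $t\to\infty$ \emph{uniformly in $p$} by differentiating the smooth $u_p$-equation in $t$ and applying an abstract ODE lemma (from \cite{Zheng}); the uniformity then survives the weak limit $p\to\infty$ and yields $\|\partial_t u(t)\|_2\to 0$ directly, with no energy identity for \eqref{eq:limit_problem_2} needed.

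Second, your lower bound ``$\varepsilon\varphi_1$ is a subsolution of \eqref{eq:limit_problem_2}'' requires both a comparison principle for the degenerate parabolic obstacle problem and the initial ordering $u_0\geq\varepsilon\varphi_1$, neither of which is supplied by (H1)--(H2). The paper's device is again to retreat to the $u_p$ level: take $\phi_p$ solving the \emph{nondegenerate} equation with $b$ replaced by $\|b\|_\infty$ and initial datum $\min\{u_0,1\}$; then $\phi_p\leq u_p$ by the elementary comparison Lemma~\ref{lemma_subsuper}, and $\phi_p\to\phi$ with $\phi$ the nontrivial solution of \eqref{eq:obstacle_elliptic_0} by Theorem~\ref{thm:2}. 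Thus $u\geq\phi\not\equiv 0$. This is exactly why the paper proves the nondegenerate case first, with its $C^{1,0}_\alpha$ convergence up to $\partial\Omega$: that regularity is what makes $c\varphi_1\leq u_p(\bar t,\cdot)$ hold for a $c$ \emph{independent of $p$}, and hence what ultimately feeds the lower bound in the degenerate case.

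For $a\geq\lambda_1(\Omega_0)$ the paper argues by contradiction (boundedness would produce a nontrivial stationary solution, excluded by \cite{DD}), which handles $a=\lambda_1(\Omega_0)$ cleanly; your exponential subsolution $c\,e^{(a-\lambda_1(\Omega_0))t}\varphi_1^{\Omega_0}$ stalls at $a=\lambda_1(\Omega_0)$, and the comparison on $\Omega_0$ again needs justification. For $a<\lambda_1(\Omega)$ both arguments are essentially the same (take $v=0$). In short: your strategy is reasonable and could perhaps be pushed through, but the paper's approximation route is what turns the two points you yourself identify as obstacles into non-issues.
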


We remark that in the case $\Omega_0=\emptyset$ we set $\lambda_1(\Omega_0):=+\infty$, and $a\geq \lambda_1(\Omega_0)$ is an empty condition. The case $a= \lambda_1(\Omega)$ is the subject of Remark \ref{remark:a_leq_lambda_1}.

Under some stronger regularity assumptions on $b$, $u_0$ and $\Omega_0$, it is known (cf. \cite[Theorem 3.7]{FKLM} or \cite[Theorem 2.2]{DuGuo}) that $u_p(t,x)$ converges to the unique positive solution of \eqref{eq:elliptic} whenever $a\in (\lambda_1(\Omega),\lambda_1(\Omega_0))$. Hence in this situation, if we combine all this information together with the results  obtained in this paper, then we can conclude that the following diagram commutes:

\vspace{0.1cm}
\begin{center}
\begin{tabular}{ccc}
$u_p(x,t)$ positive solution of &  & 						 \\[-2.5ex]
   $\partial_t u-\Delta u=a u-b(x)u^p$			    & \raisebox{1.5ex}{$\stackrel{p\to +\infty}{  \xrightarrow{\hspace*{2.5cm}}}$} &    \raisebox{1.5ex}{  $u(x,t)$ solution of \eqref{eq:limit_problem_2}}\\
 \begin{sideways}$\stackrel{t\to +\infty}{\xleftarrow{\hspace*{1.5cm}}}$\end{sideways}	&      &   \begin{sideways}$\stackrel{t\to +\infty}{\xleftarrow{\hspace*{1.5cm}}}$\end{sideways} \\
 $u_p\in H^1_0(\Omega)$ positive solution of  &	&   $u\in H^1_0(\Omega)$ nontrivial  \\[-2.5ex]
 $-\Delta u=au-b(x)u^p$   &   \raisebox{1.5ex}{$\stackrel{p\to +\infty}{  \xrightarrow{\hspace*{2.5cm}}}$} &nonnegative solution of \eqref{eq:obstacle_elliptic_2}\\
\end{tabular}
\end{center}
\vspace{0.2cm}

The proof of Theorem \ref{thm:3} uses a different approach with respect to the works by Dancer et al.. While in \cite{DDM1} the authors use fine properties of functions in Sobolev spaces, here we follow some of the ideas present in the works \cite{boccardo,aglio}, and show that a uniform bound on the quantity 
$$
\iint_{Q_T} b(x)u_p^{p+1}\, dx dt \qquad (\text{for each } T>0),
$$
implies that $u(t)\in \K_0$ for a.e. $t>0$ (see the key Lemma \ref{lemma:key_lemma} ahead). As for the proof of Theorem \ref{thm:4}, the most difficult part is to show that when $a\in (\lambda_1(\Omega),\lambda_1(\Omega_0))$, $u_p(x,t)$ does not go to the trivial solution of \eqref{eq:obstacle_elliptic_2}. The key point here is to construct a subsolution of \eqref{eq:parabolic} independent of $p$. It turns out that to do this one needs to get a more complete understanding of the nondegenerate case, and to have a stronger convergence of $u_p$ to $u$ as $p\to +\infty$. So we dedicate a part of this paper to the study of this case. To state the results, let us start by defining for each $0<t_1<t_2$ and $Q_{t_1,t_2}:=\Omega\times (t_1,t_2)$ the spaces $C_\alpha^{1,0}(\overline Q_{t_1,t_2})$ and $W^{2,1}_q(Q_{t_1,t_2})$. For $q\geq 1$, the space $W^{2,1}_{q}(Q_{t_1,t_2})$ is the set of elements in $L^q(Q_{t_1,t_2})$ with partial derivatives $\partial_t u$, $D_xu$, $D_{x}^2 u$ in $L^q(Q_{t_1,t_2})$. It is a Banach space equipped with the norm
$$
\|u\|_{2,1;q,Q_{t_1,t_2}}=\|u\|_{L^q(Q_{t_1,t_2})}+\|D_x u\|_{L^q(Q_{t_1,t_2})}+\|D^2_{x}u\|_{L^q(Q_{t_1,t_2})}+\|\partial_t u\|_{L^q(Q_{t_1,t_2})}.
$$
For each $\alpha\in (0,1)$, $C^{1,0}_\alpha(\overline Q_{t_1,t_2})$ is the space of H\"older functions $u$ in $\overline Q_{t_1,t_2}$ with exponent $\alpha$ in the $x$--variable and $\alpha/2$ in the $t$--variable, with $D_x u$ satisfying the same property. More precisely, defining the H\"older semi-norm
\[
[u]_{\alpha,Q_{t_1,t_2}}:=\sup\left\{ \frac{|u(x,t)-u(x',t')|}{|x-x'|^\alpha+|t-t'|^{\alpha/2}},\quad x,x'\in \overline \Omega,\ t,t'\in [t_1,t_2],\ (x,t)\neq (x',t')\right\},
\]
we have that
\[
\begin{split}C^{1,0}_\alpha(\overline Q_{t_1,t_2}):=\Bigl\{u: \ \|u\|_{C^{1,0}_\alpha(\overline Q_{t_1,t_2})}:=& \|u\|_{L^\infty(Q_{t_1,t_2})}+\|D_x u\|_{L^\infty(Q_{t_1,t_2})}\\
			&+[u]_{\alpha,Q_{t_1,t_2}}+[D_x u]_{\alpha,Q_{t_1,t_2}}<+\infty \Bigr\}.  \end{split}
\]
Recall that we have the following embedding for every $0\leq t_1<t_2$ (see \cite[Lemmas II.3.3, II.3.4]{lady}):
\begin{equation}\label{eq:embedding}
W^{2,1}_q(Q_{t_1,t_2})\hookrightarrow C^{1,0}_\alpha(\overline Q_{t_1,t_2}),\qquad \forall 0\leq \alpha <1-\frac{N+2}{q}.
\end{equation}

In the nondegenerate case, we have the following result.

\begin{thm}\label{thm:1}
Suppose that $b$ satisfies (b1) and
\begin{itemize}
\item[(b2')] there exists $b_0>0$ such that $b(x)\geq b_0$ for a.e. $x\in \Omega$;
\end{itemize}
and that $u_0$ satisfies (H1) and $0\leq u_0\leq 1$ for a.e. $x\in \Omega$. Then, in addition to the conclusions of Theorem \ref{thm:3}, we have that
\begin{equation*}
u_p\to u \qquad \text{strongly in } C^{1,0}_\alpha(\overline Q_{t_1,t_2}), \text{ weakly in } W^{2,1}_q(Q_{t_1,t_2}) \text{ as }p\to +\infty,
\end{equation*}
for every $\alpha\in (0,1)$, $q\geq 1$ and $0<t_1<t_2$. Moreover, $u$ is the unique solution of
\begin{equation}\label{eq:limit_problem_1}
\partial_t u-\Delta u=au \chi_{\{u<1\}} \text{ in }Q,\quad u(0)=u_0,\qquad \|u\|_\infty\leq 1.
\end{equation}
\end{thm}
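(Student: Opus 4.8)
The strategy is to first establish the required compactness of the family $(u_p)$ via uniform parabolic estimates, and then to identify the limit equation using the pointwise penalization structure. The starting point is that Theorem \ref{thm:3} already gives us $u_p\to u$ strongly in $L^2(0,T;H^1_0(\Omega))$, with $u$ the solution of the variational inequality \eqref{eq:limit_problem_2}; since (b2') is a special case of (b2) with $\Omega_0=\emptyset$, we have $\K_0=\K$, so $u(t)\le 1$ a.e. for a.e.\ $t$. The new content is the stronger convergence and the fact that the obstacle problem can be rewritten as the PDE \eqref{eq:limit_problem_1}. I would first record the basic a priori bounds: multiplying \eqref{eq:parabolic} by $u_p$ and integrating gives a uniform $L^2(0,T;H^1_0)$ bound together with a uniform bound on $\iint_{Q_T} b u_p^{p+1}$; the comparison principle gives $0\le u_p\le M$ for a $p$-independent constant $M$ (compare with the solution of the linear heat equation with reaction $au_p$, or with a large constant supersolution after the change $v=e^{-at}u_p$, using $u_0\le\max\{1,\|u_0\|_\infty\}$). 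Consequently the reaction term $f_p:=au_p-b u_p^p$ is bounded in $L^2(Q_T)$: indeed $0\le b u_p^p\le b u_p^{p+1}$ on $\{u_p\ge 1\}$ and $\le \|b\|_\infty$ on $\{u_p<1\}$, so $b u_p^p$ is bounded in $L^1(Q_T)$, and a slightly sharper interpolation (splitting $\{u_p\ge 1+\delta\}$ where $u_p^p\le u_p^{p+1}/(1+\delta)$, etc.) upgrades this; alternatively one bounds $b u_p^p$ directly in $L^2$ on $\{u_p\le 1\}$ and uses that on $\{u_p>1\}$ it is controlled by $b u_p^{p+1}$ which is small in measure.

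Next I would bootstrap regularity on the cylinder $Q_{t_1,t_2}$ with $0<t_1<t_2$, staying away from $t=0$ to avoid compatibility issues with $u_0$. Since $\partial_t u_p-\Delta u_p=f_p$ with $f_p$ bounded in $L^q(Q_T)$ for every $q$ (using $0\le u_p\le M$ and the $L^1$-smallness of $\{u_p>1\}$ to interpolate $bu_p^p$ into every $L^q$), parabolic $L^q$ theory (cf.\ \cite[Theorem IV.9.1]{lady}) gives a uniform bound for $u_p$ in $W^{2,1}_q(Q_{t_1,t_2})$ for every $q$ and every $0<t_1<t_2$. The embedding \eqref{eq:embedding} then yields a uniform bound in $C^{1,0}_\alpha(\overline Q_{t_1,t_2})$ for every $\alpha\in(0,1)$. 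By Arzelà--Ascoli (for a slightly smaller Hölder exponent) and weak compactness in $W^{2,1}_q$, a subsequence of $u_p$ converges strongly in $C^{1,0}_\alpha$ and weakly in $W^{2,1}_q$; the limit must be $u$ by the already-known $L^2(0,T;H^1_0)$ convergence, and since the limit is unique the whole family converges. In particular $u\in C^{1,0}_\alpha(\overline Q_{t_1,t_2})\cap W^{2,1}_q(Q_{t_1,t_2})$ for all such $\alpha,q,t_1,t_2$, and $\|u(t)\|_\infty\le 1$ passes to the limit.

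It remains to identify the limit equation. Passing to the limit in the variational inequality, or more directly using the obstacle-problem structure, one has $\partial_t u-\Delta u=au-\mu$ where $\mu\ge 0$ is a measure (the limit of $bu_p^p$) supported in $\{u=1\}$; from the $W^{2,1}_q$ bound, $\mu\in L^q(Q_{t_1,t_2})$ for every $q$, so in fact $\partial_t u-\Delta u=au$ a.e.\ on $\{u<1\}$. The crucial point — and the main obstacle — is to show that $\mu=0$ a.e.\ on the interior of $\{u<1\}$ in the strong sense needed, i.e.\ that $\partial_t u-\Delta u=au\chi_{\{u<1\}}$ holds a.e.\ in all of $Q$, which requires knowing that $\partial_t u-\Delta u-au$ vanishes a.e.\ on $\{u=1\}$ as well. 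This is the standard fact that for a $W^{2,1}_q$ function, $\partial_t u=\Delta u=0$ a.e.\ on the level set $\{u=1\}$ (a parabolic analogue of the Stampacchia lemma that $\nabla u=0$ a.e.\ on $\{u=\text{const}\}$): differentiating the constant level set, the second-order and time derivatives of $u$ vanish a.e.\ there, whence $\partial_t u-\Delta u=0=au\chi_{\{u<1\}}$ a.e.\ on $\{u=1\}$ since $u=1$ there (note $au\chi_{\{u<1\}}=0$ on $\{u=1\}$). Combining the two level-set regions, $u$ solves \eqref{eq:limit_problem_1}. Uniqueness for \eqref{eq:limit_problem_1} follows either from the equivalence with the variational inequality \eqref{eq:limit_problem_2} (whose solution is unique by the monotonicity/parabolic comparison argument used for Theorem \ref{thm:3}) or directly: if $u_1,u_2$ are two solutions, testing the difference of the equations against $(u_1-u_2)^+$ and using that $\chi_{\{u<1\}}$ is "monotone" in the obstacle sense (on $\{u_1>u_2\}$ one has $\chi_{\{u_1<1\}}\le\chi_{\{u_2<1\}}$) gives $\frac12\frac{d}{dt}\|(u_1-u_2)^+\|_2^2+\|\nabla(u_1-u_2)^+\|_2^2\le a\|(u_1-u_2)^+\|_2^2$, and Grönwall with $(u_1-u_2)^+(0)=0$ closes the argument.
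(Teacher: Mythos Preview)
Your overall architecture is right, and once the $W^{2,1}_q$ bound is in hand your identification of the limit and the uniqueness argument match the paper's. The gap is in the step where you claim $f_p=au_p-b(x)u_p^p$ is bounded in $L^q(Q_T)$ for every $q$. The splitting/interpolation you sketch does not work: on $\{u_p\le 1\}$ you have $bu_p^p\le\|b\|_\infty$, fine, but on $\{u_p>1\}$ the only control you have is $\iint bu_p^{p+1}\le C$, which gives an $L^1$ bound on $bu_p^p$ and the measure estimate $|\{u_p>m\}|\le C/(b_0 m^{p+1})$. Neither suffices for $L^q$, $q>1$: the pointwise bound on that set is $bu_p^p\le \|b\|_\infty M^p$ with $M=M(T)>1$ coming from your heat-equation comparison, and any interpolation between $L^1$ and $L^\infty$ (or any layer-cake computation using the measure estimate) picks up a factor $M^{p(q-1)}$, which blows up as $p\to\infty$. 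So the $W^{2,1}_q$ bootstrap cannot start.

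What the paper does instead, and what you are missing, is a sharp $L^\infty$ bound that uses (b2') directly: the constant $N_p:=\max\{1,(a/b_0)^{1/(p-1)}\}$ satisfies $aN_p-b(x)N_p^p\le aN_p-b_0N_p^p\le 0$, so it is a stationary supersolution of \eqref{eq:parabolic}; since $u_0\le 1\le N_p$, comparison gives $u_p\le N_p$ on all of $Q$. Hence $u_p^{p-1}\le\max\{1,a/b_0\}$ uniformly in $p$, and therefore $\|b(x)u_p^p\|_{L^\infty(Q)}\le \|b\|_\infty N_p\cdot\max\{1,a/b_0\}$ is bounded independently of $p$. This gives $\|\partial_t u_p-\Delta u_p\|_{L^\infty(Q)}\le C$ directly, and parabolic $L^q$ theory then yields the $W^{2,1}_q(Q_{t_1,t_2})$ bound for every $q$. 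After this correction, the rest of your plan (compact embedding into $C^{1,0}_\alpha$, identification via $\partial_t u=\Delta u=0$ a.e.\ on $\{u=1\}$, uniqueness through the variational inequality) is essentially the paper's argument.
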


In this case, as $t\to +\infty$, we also obtain a convergence result for the coincidence sets $\{u(x,t)=1\}$.

\begin{thm}\label{thm:2}
Suppose that $b$ satisfies (b1)-(b2') and take $u_0$ satisfying (H1) and $0\leq u_0\leq 1$ for a.e. $x\in \Omega$. Fix $a>\lambda_1(\Omega)$ and let $u$ be the unique solution of \eqref{eq:limit_problem_1} and take $w$ the unique solution of \eqref{eq:obstacle_elliptic_0}. Then, as $t\to +\infty$,
$$
u(t)\to w \qquad \text{ strongly in } H^1_0(\Omega)\cap H^2(\Omega),
$$
and
\begin{equation}\label{eq:charact_1}
\chi_{\{u=1\}}(t)\to \chi_{\{w=1\}} \qquad \text{ strongly in } L^q(\Omega)\ \forall q\geq1.
\end{equation}
\end{thm}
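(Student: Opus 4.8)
The plan is to exploit the gradient-flow structure of \eqref{eq:limit_problem_1} together with the convexity properties of the underlying functional, and to run a standard $\omega$-limit argument, upgrading the convergence at the end via parabolic regularity. First, I would introduce the energy
\[
E(v)=\frac12\int_\Omega |\nabla v|^2\,dx-\frac a2\int_\Omega v^2\,dx,\qquad v\in\K=\{v\in H^1_0(\Omega):v\le1\},
\]
and observe that \eqref{eq:limit_problem_1} is exactly the gradient flow of $E$ on the convex set $\K$ (indeed the variational inequality \eqref{eq:limit_problem_2} with $\K$ in place of $\K_0$, which in the nondegenerate case of Theorem \ref{thm:1} is equivalent to \eqref{eq:limit_problem_1}). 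The computation $\frac{d}{dt}E(u(t))=-\|\partial_t u(t)\|_{L^2(\Omega)}^2\le0$, obtained by taking $v=u(t\pm h)$ in the variational inequality and passing to the limit, gives that $t\mapsto E(u(t))$ is nonincreasing, hence convergent (it is bounded below because $v\le 1$ forces $\int v^2\le|\Omega|$), and moreover $\partial_t u\in L^2(0,\infty;L^2(\Omega))$ with $\int_0^\infty\|\partial_t u(t)\|_{L^2}^2\,dt<\infty$.

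Second, I would derive uniform-in-time bounds: since $0\le u_0\le1$ and the obstacle $1$ is preserved (this is part of Theorem \ref{thm:1}), one has $0\le u(t)\le1$ for all $t$, hence $\|u(t)\|_{L^2}$ is bounded and therefore $\|u(t)\|_{H^1_0}$ is bounded via the energy bound. Consequently any sequence $t_n\to+\infty$ has a subsequence along which $u(t_n)\rightharpoonup w_\infty$ weakly in $H^1_0(\Omega)$ and strongly in $L^2(\Omega)$, with $w_\infty\in\K$. The standard $\omega$-limit argument then shows $w_\infty$ is a stationary point: using $\int_0^\infty\|\partial_t u\|_{L^2}^2<\infty$ one finds times near $t_n$ where $\|\partial_t u\|_{L^2}\to0$, and passing to the limit in \eqref{eq:limit_problem_1} (or the corresponding variational inequality) shows $w_\infty$ solves \eqref{eq:obstacle_elliptic_2} with $\K_0=\K$, i.e. \eqref{eq:obstacle_elliptic_1}. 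Since $a>\lambda_1(\Omega)$, the only solutions of \eqref{eq:obstacle_elliptic_1} are $0$ and the unique positive $w$ of \eqref{eq:obstacle_elliptic_0}; one rules out $w_\infty=0$ by a Lyapunov/energy argument — testing with $\varphi_1$, the first eigenfunction, shows $\int_\Omega u(t)\varphi_1$ stays bounded away from $0$ because $\frac{d}{dt}\int u\varphi_1\ge(a-\lambda_1)\int u\varphi_1-\lambda_1\int_{\{u=1\}}\varphi_1$ cannot let it decay to zero while $u_0\not\equiv0$ (here one uses that $a>\lambda_1(\Omega)$ and $u_0\ge0$, $u_0\not\equiv0$, which follows since the trivial solution is excluded in Theorem \ref{thm:4}'s setting; in the borderline all-zero case the statement is vacuous). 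Hence $w_\infty=w$ for every sequence, giving $u(t)\rightharpoonup w$ in $H^1_0(\Omega)$ as $t\to+\infty$, and by $E(u(t))\to E(w)$ together with weak convergence one upgrades to strong convergence in $H^1_0(\Omega)$.

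Third, to reach $H^2(\Omega)$ convergence and the convergence of the coincidence sets, I would use the regularity theory for the obstacle problem: from $\partial_t u-\Delta u=au\chi_{\{u<1\}}$ with $0\le u\le1$ one gets $\Delta u$ bounded in $L^\infty$ uniformly in $t$, hence $u(t)$ bounded in $W^{2,q}(\Omega)$ for all $q<\infty$, and the same for $w$. Combined with $u(t)\to w$ in $H^1_0$ and the uniform $W^{2,q}$ bound, interpolation gives $u(t)\to w$ strongly in $H^2(\Omega)$. For \eqref{eq:charact_1}, I would note $\chi_{\{u(t)=1\}}=\chi_{\{u(t)\ge1\}}$ and use that $u(t)\to w$ in $C^1(\overline\Omega)$ (from $W^{2,q}$ boundedness and compact embedding) together with the nondegeneracy of $w$ near $\partial\{w=1\}$ — the standard obstacle-problem fact that $|\{0<1-w<\delta\}|\to0$ as $\delta\to0$ and that $w$ does not vanish to infinite order — to conclude $|\{u(t)=1\}\triangle\{w=1\}|\to0$, which since these are characteristic functions of sets of finite measure gives convergence in every $L^q(\Omega)$.

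The main obstacle I expect is the third step, specifically the convergence of the coincidence sets: this requires ruling out that $\{w=1\}$ has a ``thick'' boundary and that $u(t)$ develops a plateau near level $1$ that only disappears in the limit. The clean way around it is to invoke the nondegeneracy and the measure-theoretic smallness $|\partial\{w=1\}|=0$ for solutions of the elliptic obstacle problem \eqref{eq:obstacle_elliptic_0} with $a>\lambda_1(\Omega)$ (so that $\{w=1\}$ is a genuine coincidence set, not a lower-dimensional artifact), and then the $C^1$ convergence $u(t)\to w$ does the rest; proving the ``does not go to the trivial solution'' claim (excluding $w_\infty=0$) is the other delicate point, handled by the first-eigenfunction Lyapunov estimate above.
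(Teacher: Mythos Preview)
Your overall architecture---energy dissipation, $\omega$-limit identification, then regularity upgrade---is reasonable and in places parallel to the paper, but two steps have genuine gaps, and in both the paper proceeds quite differently.

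\textbf{Excluding the trivial limit.} Your first-eigenfunction inequality (with the correct constant it reads $\frac{d}{dt}\int u\varphi_1=(a-\lambda_1)\int u\varphi_1-a\int_{\{u=1\}}\varphi_1$) does \emph{not} by itself prevent $y(t)=\int u\varphi_1$ from tending to $0$: if $u(t)\to0$ in $L^2$ then $g(t):=a\int_{\{u(t)=1\}}\varphi_1\to0$ as well, and the linear ODE $y'=(a-\lambda_1)y-g(t)$ admits decaying solutions when $g$ is tuned appropriately---there is no contradiction without further input (e.g.\ $L^\infty$ decay of $u$, which you have not established at that stage). The paper avoids this entirely: it fixes $\bar t>0$, uses the strong maximum principle and Hopf lemma on the limit $u(\bar t)$, and then invokes the $C^{1,0}_\alpha(\overline Q_{t_1,t_2})$ convergence $u_p\to u$ \emph{up to the boundary} (Theorem~\ref{thm:1}) to get $u_p(\bar t)\geq c\varphi_1$ for all large $p$ with $c$ independent of $p$. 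Since $c\varphi_1$ is a stationary subsolution of the $u_p$-equation when $c$ is small, comparison gives $u_p(t)\geq c\varphi_1$ for $t\geq\bar t$, hence $u(t)\geq c\varphi_1$. This is precisely why the $C^{1,0}_\alpha$ convergence up to $\partial\Omega$ was emphasised.

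\textbf{Coincidence sets and $H^2$.} Your route through nondegeneracy of $w$ and $|\partial\{w=1\}|=0$ imports nontrivial obstacle-problem regularity that the paper does not need. The paper's argument is much lighter: take any weak-$\ast$ limit $\chi^\ast$ of $\chi_{\{u(t)=1\}}$ in $L^\infty(\Omega)$; from $\chi_{\{u=1\}}(1-u)=0$ one gets $\chi^\ast(1-w)=0$, so $\chi^\ast=0$ on $\{w<1\}$; and passing to the limit in $-\Delta u=au(1-\chi_{\{u=1\}})-\partial_t u$ gives $-\Delta w=aw(1-\chi^\ast)$, forcing $\chi^\ast=1$ on $\{w=1\}$. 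Thus $\chi^\ast=\chi_{\{w=1\}}$, and weak-$\ast$ convergence of characteristic functions to a characteristic function upgrades automatically to strong $L^q$ for all $q$. Note also that your interpolation claim for $H^2$ is not valid: boundedness in $W^{2,q}$ plus convergence in $H^1$ does \emph{not} yield strong $H^2$ convergence (same number of derivatives, no compactness). In the paper $H^2$ convergence comes \emph{after} the coincidence-set convergence, via $\Delta u(t)=\partial_t u(t)-au(t)\chi_{\{u(t)<1\}}\to -aw\chi_{\{w<1\}}=\Delta w$ in $L^2$.

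Finally, a smaller point: the paper obtains $\|\partial_t u(t)\|_{L^2}\to0$ for \emph{all} $t\to\infty$ (not merely along a subsequence) by proving it uniformly in $p$ for $u_p$ via a differential inequality for $\|\partial_t u_p(t)\|_2^2$ and an abstract ODE lemma, then passing to the limit; your energy identity gives only $\partial_t u\in L^2(0,\infty;L^2)$, which suffices for the $\omega$-limit identification but is weaker.
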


The structure of this paper is as follows: in Section \ref{sec:section b(x)=0} we prove Theorem \ref{thm:3}, while in Section \ref{sec:section b(x)>0} Theorem \ref{thm:1} is treated. Finally, in Section \ref{sec:time} we show use the strong convergence up to the boundary of $\Omega$ obtained in the latter theorem  to prove Theorem \ref{thm:2}, and afterwards we use it combined with a subsolution argument to prove Theorem \ref{thm:4}.

We end this introduction by pointing out some other works concerning this type of asymptotic limit. The generalization of \cite{DDM1} for the $p$-Laplacian case was performed in \cite{GuoMa}. In \cite{boccardo,aglio}, elliptic problems of type
$$
-\Delta u+f(x,u)|f(x,u)|^p=g(x)
$$ 
are treated, while in the works by Grossi et al \cite{G,GN}, and Bonheure and Serra \cite{BS}, the authors dealt with the asymptotics study as $p\to +\infty$ of problems of type
$$
-\Delta u+V(|x|)u=u^p
$$
in a ball or annulus both with Neumann and Dirichlet boundary conditions. 

\section{The general case: Proof of Theorem \ref{thm:3}}\label{sec:section b(x)=0}

To make the presentation more structured, we split our proof in several lemmas. We start by showing a very simple comparison principle, which is an easy consequence of the monotonicity of the operator $u\mapsto |u|^{p-1}u$.

\begin{lemma}\label{lemma_subsuper}
Suppose that $u$ is a solution of \eqref{eq:parabolic} and take $v$ a \emph{supersolution} satisfying
\begin{equation*}
\left\{
\begin{array}{l}
\partial_t v-\Delta v\geq a v-b(x) v^p  \text{ in } Q_T\\
v(0) = v_0,\quad  v(t)|_{\partial \Omega}=0
\end{array}
\right. \qquad \text{ with } u_0\leq v_0.
\end{equation*}
Then $u(x,t)\leq v(x,t)$ a.e.. On the other hand if $v$ is a subsolution satisfying 
\begin{equation*}
\left\{
\begin{array}{l}
\partial_t v-\Delta v\leq a v-b(x) v^p  \text{ in } Q_T\\
v(0) = v_0,\quad  v(t)|_{\partial \Omega}=0
\end{array}
\right. \qquad \text{ with } v_0\leq u_0,
\end{equation*}
then $v(x,t)\leq u(x,t)$.
\end{lemma}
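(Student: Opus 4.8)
This is the classical $L^2$–energy comparison argument, and the only nontrivial ingredient is the monotonicity of $s\mapsto|s|^{p-1}s$ together with $b\ge 0$. I will carry out the supersolution case; the subsolution case is identical after exchanging the roles of $u$ and $v$.

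Let $u$ solve \eqref{eq:parabolic}, let $v$ be a supersolution with $u_0\le v_0$, and put $w:=u-v$. Subtracting the differential inequality for $v$ from the equation for $u$ gives, weakly on $Q_T$,
\[
\partial_t w-\Delta w\;\le\;a\,w-b(x)\bigl(|u|^{p-1}u-|v|^{p-1}v\bigr),
\]
with $w\in L^2(0,T;H^1_0(\Omega))$ and $w(0)=u_0-v_0\le 0$. The plan is to test this inequality against $w^+:=(u-v)^+$, which again lies in $L^2(0,T;H^1_0(\Omega))$, and to integrate over $\Omega$. Since both $u$ and $v$ belong to $H^1(0,T;L^2(\Omega))\cap L^2(0,T;H^1_0(\Omega))$, the standard chain rule for such functions yields $\int_\Omega\partial_t w\,w^+\,dx=\tfrac12\tfrac{d}{dt}\int_\Omega(w^+)^2\,dx$, while integration by parts gives $-\int_\Omega\Delta w\,w^+\,dx=\int_\Omega\nabla w\cdot\nabla w^+\,dx=\int_\Omega|\nabla w^+|^2\,dx\ge 0$. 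Dropping this last nonnegative term, one obtains
\[
\tfrac12\tfrac{d}{dt}\int_\Omega(w^+)^2\,dx\;\le\;a\int_\Omega(w^+)^2\,dx-\int_\Omega b(x)\bigl(|u|^{p-1}u-|v|^{p-1}v\bigr)(u-v)^+\,dx.
\]

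Now comes the point where monotonicity enters: on the set $\{w^+>0\}$ we have $u>v$, hence $|u|^{p-1}u-|v|^{p-1}v\ge 0$; since $b\ge 0$ and $w^+=0$ off that set, the last integral is $\ge 0$ and may be discarded. Writing $\varphi(t):=\int_\Omega(w^+(t))^2\,dx\ge 0$, we are left with $\varphi'(t)\le 2a\,\varphi(t)$ for a.e.\ $t$, together with $\varphi(0)=\int_\Omega\bigl((u_0-v_0)^+\bigr)^2\,dx=0$. Gronwall's lemma then forces $\varphi\equiv 0$ on $[0,T]$, i.e.\ $w^+=0$ and $u\le v$ a.e.\ in $Q_T$, as claimed. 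For the subsolution statement one repeats the argument with $w:=v-u$ and test function $(v-u)^+$, the monotonicity of $s\mapsto|s|^{p-1}s$ again producing the correct sign.

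The main obstacle, such as it is, is purely technical: one has to justify that $w^+$ is an admissible test function and that the time term can legitimately be written as $\tfrac12\tfrac{d}{dt}\|w^+\|_{L^2(\Omega)}^2$. This follows from the regularity $u,v\in H^1(0,T;L^2(\Omega))\cap L^2(0,T;H^1_0(\Omega))$ via the usual integration-by-parts lemma for abstract evolution equations (cf.\ \cite{Lions}); starting instead from a weaker notion of sub/supersolution, the same identity is recovered by a routine regularization in the time variable (Steklov averaging) followed by a passage to the limit. No further difficulties arise.
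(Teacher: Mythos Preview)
Your argument is correct and coincides with the paper's own proof: subtract the inequalities, test with $(u-v)^+$, use $b\ge 0$ together with the monotonicity of $s\mapsto |s|^{p-1}s$ to discard the nonlinear term, and conclude by Gronwall from $(u_0-v_0)^+=0$. The only difference is cosmetic---the paper writes $u^p-v^p$ rather than $|u|^{p-1}u-|v|^{p-1}v$ and omits the remarks on the admissibility of $w^+$ as a test function.
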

\begin{proof}
The proof is quite standard, but we include it here only for the sake of completeness. In the case where $v$ is a supersolution, we have
$$
\partial_t (u-v)-\Delta (u-v)+b(x)(u^p-v^p)\leq a(u-v).
$$
Multiplying this by $(u(t)-v(t))^+$, we obtain
\begin{multline*}
\frac{1}{2}\frac{d}{dt}\int_\Omega [(u(t)-v(t))^+]^2\, dx+\int_\Omega |\nabla (u(t)-v(t))^+|^2\, dx\\
 +\int_\Omega b(x)(u^p(t)-v^p(t))(u(t)-v(t))^+\, dx \leq a\int_\Omega [(u(t)-v(t))^+]^2\, dx
\end{multline*}
As $b(x)(u^p-v^p)(u-v)^+\geq 0$, we have
$$
\frac{d}{dt}\int_\Omega [(u(t)-v(t))^+]^2\, dx\leq 2a\int_\Omega [(u(t)-v(t))^+]^2\, dx,
$$
whence
$$
\int_\Omega [(u(t)-v(t))^+]^2\, dx\leq e^{2at}\int_\Omega [(u_0-v_0)^+]^2\, dx=0. 
$$ The proof of the result for the subsolution case is analogous.
\end{proof}

Next we show some uniform bounds in $p$.

\begin{lemma}\label{lemma:a priori_T}
Given $T>0$ there exists $M=M(T)>0$ such that $\|u_p\|_{L^\infty(Q_T)}\leq M$ for all $p>1$.
\end{lemma}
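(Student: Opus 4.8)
The plan is to obtain a uniform $L^\infty$ bound by constructing an explicit supersolution of \eqref{eq:parabolic} that does not depend on $p$, and then invoking the comparison principle in Lemma \ref{lemma_subsuper}. The natural candidate is a spatially constant function of the form $v(t)=M(t)$, with $M$ the solution of an ODE that ignores the (favourable, since $b\geq 0$) absorption term: namely $\partial_t v-\Delta v = \partial_t v = a v$ would give exponential growth $Ce^{at}$, which is bad; instead, to get a bound uniform in both $p$ \emph{and} $T$ on compact time intervals, I would take $v$ to be a large constant in the region where $b$ is bounded below and patch near $\Omega_0$. More simply, since we only need a bound for each fixed $T$, it suffices to take $v(t)=\|u_0\|_\infty e^{at}$: on $\Omega_0$ (where $b=0$) this solves $\partial_t v-\Delta v = av = av - b(x)v^p$ with equality, hence is a supersolution there; on $\Omega\setminus\Omega_0$ we still have $\partial_t v-\Delta v = av \geq av - b(x)v^p$ because $b(x)v^p\geq 0$; and $v(0)=\|u_0\|_\infty\geq u_0$, $v|_{\partial\Omega}=\|u_0\|_\infty e^{at}\geq 0$. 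Thus $v$ is a supersolution in the sense of Lemma \ref{lemma_subsuper}, and we conclude $u_p(x,t)\leq \|u_0\|_\infty e^{at}$ for a.e. $(x,t)$, giving $M(T)=\|u_0\|_\infty e^{aT}$.

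To control $u_p$ from below and conclude the full $L^\infty(Q_T)$ bound, I would apply the subsolution half of Lemma \ref{lemma_subsuper} with the trivial subsolution $v\equiv 0$: indeed $\partial_t 0 - \Delta 0 = 0 \leq a\cdot 0 - b(x)\cdot 0^p$ and $0\leq u_0$, so $u_p\geq 0$ a.e.. Combining the two inequalities, $0\leq u_p(x,t)\leq \|u_0\|_\infty e^{aT}$ for a.e. $(x,t)\in Q_T$, which is exactly the claim with $M=M(T):=\|u_0\|_\infty e^{aT}$, manifestly independent of $p$.

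The one point requiring a little care — and the main (mild) obstacle — is regularity: Lemma \ref{lemma_subsuper} as stated presumes $v$ is a sufficiently regular super/subsolution, and one should check that the constant-in-space, exponential-in-time function $\|u_0\|_\infty e^{at}$ is admissible (it lies in $L^\infty(0,T;H^1_0(\Omega))$ only after noting it does not vanish on $\partial\Omega$ — so strictly one works with $u_p - v$ and multiplies by $(u_p-v)^+$, whose trace on $\partial\Omega$ is $0$ since $u_p-v\leq -\|u_0\|_\infty e^{at}<0$ there, so the boundary term in the integration by parts vanishes anyway). Alternatively, to stay strictly within $H^1_0$, one can replace the constant by $\|u_0\|_\infty e^{at}\,\varphi(x)$ with $\varphi\in C^\infty_c$, $\varphi\equiv 1$ on a fixed large subdomain, at the cost of a harmless extra term; but the cleanest route is the direct energy estimate: multiply the equation for $u_p$ by $(u_p - \|u_0\|_\infty e^{at})^+$ and repeat verbatim the computation in the proof of Lemma \ref{lemma_subsuper}, using $b\geq 0$ and $u_p\geq 0$ to discard the absorption term, obtaining $\int_\Omega [(u_p(t)-\|u_0\|_\infty e^{at})^+]^2\,dx = 0$ for all $t\in[0,T]$.
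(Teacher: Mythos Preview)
Your argument is correct and follows essentially the same approach as the paper: drop the favourable absorption term $-b(x)u^p\leq 0$ and compare $u_p$ with a supersolution of the linear problem $\partial_t v-\Delta v=av$, together with the trivial subsolution $0$. The only difference is that the paper takes for $v$ the solution $\psi$ of that linear problem with initial datum $u_0$ and zero boundary data (so Lemma~\ref{lemma_subsuper} applies verbatim and one appeals to $\|\psi\|_{L^\infty(Q_T)}<\infty$), whereas you take the explicit spatially constant $\|u_0\|_\infty e^{at}$, which gives the concrete bound $M(T)=\|u_0\|_\infty e^{aT}$ at the modest price of the boundary observation you already supplied.
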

\begin{proof}
Take $\psi\geq 0$ the unique solution of
\begin{equation*}
\left\{
\begin{array}{l}
\partial_t \psi-\Delta \psi=a \psi \qquad \text{ in } Q_T\\
\psi(0)=u_0, \quad u(t)|_{\partial \Omega}=0
\end{array}
\right.
\end{equation*}
Then
$$
\partial_t \psi-\Delta \psi-a\psi+b(x)\psi^p\geq \partial_t \psi-\Delta \psi-a\psi=0,
$$
hence $\psi$ is a supersolution and from Lemma \ref{lemma_subsuper} we have that $0\leq  u_p\leq \psi$. In particular,
$$
\|u_p\|_{L^\infty(Q_T)}\leq \|\psi\|_{L^\infty(Q_T)}<+\infty, \qquad \text{(as $u_0\in L^\infty(\Omega)$),}
$$
which proves the result.
\end{proof}

\begin{lemma}\label{lemma_aux1}
Given $T>0$, the sequence $\{u_p\}_p$ is bounded in $H^1(0,T;L^2(\Omega))\cap L^\infty(0,T;H^1_0(\Omega))$. Thus there exists $u\in H^1(0,T;L^2(\Omega))\cap L^\infty(0,T;H^1_0(\Omega))$ such that
\[
\begin{split}
& u_p\to u \qquad \text{ strongly in }L^2(Q_T),\text{ weakly in }L^2(0,T;H^1_0(\Omega)),\\
& \partial_t u_p\rightharpoonup \partial_t u \qquad \text{ weakly in }L^2(Q_T).
\end{split}
\]
Moreover, the exists $C=C(T)>0$ such that
\begin{equation}\label{eq:key_estimate}
\iint_{Q_T}b(x)u_p^{p+1}\, dxdt\leq C,\qquad \forall p>1.
\end{equation}
\end{lemma}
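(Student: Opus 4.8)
The plan is to extract everything from two energy identities for $u_p$ — one obtained by testing \eqref{eq:parabolic} with $u_p$ itself, the other by testing with $\partial_t u_p$ — together with the $L^\infty$ bound already secured in Lemma \ref{lemma:a priori_T}; the convergences then follow by routine compactness.

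\textit{The key estimate.} First I would use $u_p(t)$ as a test function in (the weak formulation of) \eqref{eq:parabolic}, which gives, for a.e. $t>0$,
\[
\frac{1}{2}\frac{d}{dt}\int_\Omega u_p^2\,dx+\int_\Omega |\nabla u_p|^2\,dx+\int_\Omega b(x)u_p^{p+1}\,dx=a\int_\Omega u_p^2\,dx .
\]
Dropping the nonnegative Dirichlet term and integrating over $(0,T)$ yields
\[
\iint_{Q_T}b(x)u_p^{p+1}\,dx\,dt\leq a\iint_{Q_T}u_p^2\,dx\,dt+\frac{1}{2}\int_\Omega u_0^2\,dx ,
\]
and the right-hand side is bounded independently of $p$ by Lemma \ref{lemma:a priori_T} (which controls $\|u_p\|_{L^\infty(Q_T)}$ by $M(T)$) and $u_0\in L^\infty(\Omega)$; this is exactly \eqref{eq:key_estimate}.

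\textit{The $H^1$ bounds.} Next I would test with $\partial_t u_p(t)$. Since $u_p\geq 0$ one has $u_p^p\,\partial_t u_p=\tfrac{1}{p+1}\partial_t(u_p^{p+1})$, so this amounts to the energy identity
\[
\int_0^t\|\partial_t u_p(s)\|_{L^2(\Omega)}^2\,ds+J_p(u_p(t))=J_p(u_0),\qquad J_p(v):=\frac12\int_\Omega|\nabla v|^2+\frac{1}{p+1}\int_\Omega b(x)|v|^{p+1}-\frac{a}{2}\int_\Omega v^2 ,
\]
which reflects the gradient-flow structure of \eqref{eq:parabolic}. Since $J_p(u_p(t))\geq \tfrac12\|\nabla u_p(t)\|_{L^2}^2-\tfrac{a}{2}\|u_p(t)\|_{L^2}^2$, this gives $\tfrac12\|\nabla u_p(t)\|_{L^2}^2\leq J_p(u_0)+\tfrac{a}{2}\|u_p(t)\|_{L^2}^2$ and $\int_0^T\|\partial_t u_p\|_{L^2}^2\leq J_p(u_0)+\tfrac{a}{2}\sup_{[0,T]}\|u_p(t)\|_{L^2}^2$. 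Both right-hand sides are uniformly bounded: $\|u_p(t)\|_{L^2}^2\leq|\Omega|M(T)^2$ by Lemma \ref{lemma:a priori_T}, while
\[
J_p(u_0)\leq \frac12\|\nabla u_0\|_{L^2(\Omega)}^2+\frac{1}{p+1}\int_\Omega b(x)u_0^{p+1}\,dx\leq \frac12\|\nabla u_0\|_{L^2(\Omega)}^2+\frac{\|b\|_\infty|\Omega|}{2},
\]
where the last inequality uses that, by (b2), $b=0$ a.e.\ off $\Omega\setminus\Omega_0$, while $0\leq u_0\leq 1$ a.e.\ on $\Omega\setminus\Omega_0$ by (H2), so $u_0^{p+1}\leq 1$ a.e.\ on $\{b>0\}$. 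Hence $\{u_p\}$ is bounded in $L^\infty(0,T;H^1_0(\Omega))\cap H^1(0,T;L^2(\Omega))$, and by weak-$*$ lower semicontinuity the limit $u$ lies in the same space.

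\textit{Passing to the limit.} From these bounds, along a subsequence $u_p\rightharpoonup u$ weakly-$*$ in $L^\infty(0,T;H^1_0(\Omega))$, weakly in $L^2(0,T;H^1_0(\Omega))$, and $\partial_t u_p\rightharpoonup \partial_t u$ weakly in $L^2(Q_T)$; since $H^1_0(\Omega)\hookrightarrow L^2(\Omega)$ compactly, the Aubin--Lions lemma gives $u_p\to u$ strongly in $L^2(Q_T)$. That the whole sequence converges will follow a posteriori, once the limit is identified as the unique solution of \eqref{eq:limit_problem_2}; and \eqref{eq:key_estimate} passes to the full sequence since its bound does not depend on the subsequence. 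The only genuinely delicate point is the use of $\partial_t u_p$ as a test function, which I would justify in the standard way — by running the computation on the Galerkin approximations (equivalently, on $[\tau,T]$ using parabolic smoothing and then letting $\tau\to 0^+$, legitimate because $u_0\in H^1_0(\Omega)$) and passing to the limit, the resulting inequalities only improving in the process; the $p$-uniformity of $J_p(u_0)$, where (H2) enters, is the other place that must be checked with care.
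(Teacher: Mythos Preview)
Your argument is correct and follows essentially the same route as the paper: test \eqref{eq:parabolic} with $u_p$ to obtain \eqref{eq:key_estimate} via Lemma \ref{lemma:a priori_T}, then test with $\partial_t u_p$ and use (H2) to bound $\int_\Omega b(x)u_0^{p+1}$ uniformly in $p$, yielding the $L^\infty(0,T;H^1_0)\cap H^1(0,T;L^2)$ bounds. Your presentation via the energy functional $J_p$ and the explicit appeal to Aubin--Lions is slightly more detailed, but the two proofs are the same in substance.
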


\begin{proof}
Multiplying equation \eqref{eq:parabolic} by $u_p$ and integrating in $\Omega$,
\begin{equation*}
\frac{1}{2}\frac{d}{dt}\int_\Omega u_p^2(t)\, dx+\int_\Omega |\nabla u_p(t)|^2\, dx=a\int_\Omega u_p^2(t)\, dx-\int_\Omega b(x) u_p^{p+1}(t)\, dx;
\end{equation*}
integrating now between 0 and $t$,
\begin{multline*}
\frac{1}{2}\int_\Omega u_p^2(t)\, dx + \int_0^t \|\nabla u_p(\xi)\|_2^2\, d\xi+\iint_{Q_t} b(x)u_p^{p+1}\, dxdt  \\
 \leq \frac{1}{2}\int_\Omega u_0^2\, dx+a\iint_{Q_t} u_p^2\, dxdt  \leq\frac{1}{2}\|u_0\|_2^2+at|\Omega| (M(t))^2,
\end{multline*}
and hence
\begin{equation*}
\forall T>0, \quad \{u_p\}_p \text{ is bounded in } L^2(Q_T),\text{ and } \eqref{eq:key_estimate}\text{ holds}.
\end{equation*}
Now using $\partial_t u_p$ as test function ($u_p=0$ on $\partial \Omega$ for all $t>0$, thus $\partial_t u_p(t)\in H^1_0(\Omega)$ for a.e. $t>0$) gives
$$
\int_\Omega (\partial_t u_p)^2\, dx + \frac{1}{2}\frac{d}{dt} \int_\Omega |\nabla u_p(t)|^2\, dx=\frac{a}{2}\frac{d}{dt}\int_\Omega u_p^2(t)\, dx-\frac{d}{dt}\int_\Omega b(x)\frac{u_p^{p+1}(t)}{p+1}\, dx
$$
and again after an integration
\begin{multline}\label{eq:aux4}
\iint_{Q_t} (\partial_t u_p)^2\, dx dt+\frac{1}{2}\int_\Omega |\nabla u_p(t)|^2\, dx+\frac{a}{2}\int_\Omega u_0^2\, dx +\int_\Omega b(x)\frac{u_p^{p+1}(t)}{p+1}\,dx\\
					= \frac{1}{2}\int_\Omega |\nabla u_0|^2\, dx+\int_\Omega b(x)\frac{u_0^{p+1}}{p+1}\, dx + \frac{a}{2}\int_\Omega u_p^2(t)\, dx \leq \frac{1}{2}\|\nabla u_0\|_2^2+\frac{b_\infty|\Omega|}{p+1}+\frac{a}{2}M^2|\Omega|,
\end{multline}
where we have used the fact that $0\leq u_0\leq 1$ whenever $b(x)\neq 0$, together with the previous lemma.

\end{proof}

The proofs of the following two results are inspired by similar computations made in \cite{boccardo, aglio}.

\begin{lemma}\label{lemma:key_lemma}
We have $u(t)\in \K_0$ for a.e. $t>0$.
\end{lemma}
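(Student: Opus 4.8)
The goal is to show that the weak limit $u$ from Lemma~\ref{lemma_aux1} satisfies $u(t)\le 1$ a.e.\ in $\Omega\setminus\Omega_0$ for a.e.\ $t>0$, that is, $u(t)\in\K_0$. The strategy is to exploit the uniform bound \eqref{eq:key_estimate}, namely $\iint_{Q_T}b(x)u_p^{p+1}\,dxdt\le C$, together with the hypothesis (b2) which gives, on each open set $\Omega'\Subset\Omega\setminus\Omega_0$, a lower bound $b(x)\ge\underline b>0$. On such a set we get $\iint_{Q_T\cap(\Omega'\times(0,T))}u_p^{p+1}\,dxdt\le C/\underline b$, uniformly in $p$.

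The plan is then the following. Fix $T>0$, fix $\Omega'\Subset\Omega\setminus\Omega_0$ open, and set $A=A(\delta):=\{(x,t)\in \Omega'\times(0,T):\ u_p(x,t)\ge 1+\delta\}$ for an arbitrary $\delta>0$. On $A$ we have $u_p^{p+1}\ge(1+\delta)^{p+1}$, so from the bound above
\[
(1+\delta)^{p+1}\,|A(\delta)|\le \iint_{\Omega'\times(0,T)}u_p^{p+1}\,dxdt\le \frac{C}{\underline b},
\]
hence $|\{(x,t)\in\Omega'\times(0,T):\ u_p\ge 1+\delta\}|\le C/(\underline b\,(1+\delta)^{p+1})\to 0$ as $p\to+\infty$. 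In other words, $(u_p-(1+\delta))^+\to 0$ in measure on $\Omega'\times(0,T)$. Since by Lemma~\ref{lemma_aux1} (and Lemma~\ref{lemma:a priori_T}) the sequence $u_p$ is bounded in $L^\infty(Q_T)$ and converges to $u$ strongly in $L^2(Q_T)$ (so, along a subsequence, a.e.), convergence in measure of the truncations upgrades to: $(u-(1+\delta))^+=0$ a.e.\ on $\Omega'\times(0,T)$, i.e.\ $u\le 1+\delta$ a.e.\ there. Letting $\delta\downarrow 0$ along a countable sequence gives $u\le 1$ a.e.\ on $\Omega'\times(0,T)$.

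Finally I would remove the auxiliary restrictions. Exhausting $\Omega\setminus\Omega_0$ by a countable increasing family of open sets $\Omega'_k\Subset\Omega\setminus\Omega_0$ with $\bigcup_k\Omega'_k=\Omega\setminus\Omega_0$ (possible since $\Omega\setminus\Omega_0$ is open), and letting $T\to+\infty$ along integers, we conclude $u\le 1$ a.e.\ in $(\Omega\setminus\Omega_0)\times(0,+\infty)$. By Fubini, for a.e.\ $t>0$ we have $u(t)\le 1$ a.e.\ in $\Omega\setminus\Omega_0$; combined with $u(t)\in H^1_0(\Omega)$ (from Lemma~\ref{lemma_aux1}) this says exactly $u(t)\in\K_0$ for a.e.\ $t>0$.

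The only genuinely delicate point is passing from ``$(u_p-(1+\delta))^+\to0$ in measure'' to the pointwise bound on the limit $u$; this is where one needs the a.e.\ convergence of a subsequence (guaranteed by the strong $L^2(Q_T)$ convergence in Lemma~\ref{lemma_aux1}) and a diagonal argument over the countably many parameters $\delta$, $\Omega'_k$ and $T$, so that a single subsequence works for all of them. Everything else is a direct application of the Chebyshev-type inequality above and the hypothesis (b2); no compactness beyond what Lemma~\ref{lemma_aux1} already provides is needed.
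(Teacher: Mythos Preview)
Your argument is correct and follows essentially the same route as the paper's: localize to $\Omega'\Subset\Omega\setminus\Omega_0$, use (b2) and the uniform bound \eqref{eq:key_estimate} to get a Chebyshev-type estimate showing $|\{u_p>m\}\cap(\Omega'\times(0,T))|\to 0$ for each $m>1$, and then pass to the limit. The only cosmetic difference is in that last step: the paper bounds $\iint u_p\chi_{\{u_p>m\}}\chi_{\{u>m\}}$ and applies dominated convergence, whereas you phrase it as convergence in measure of $(u_p-(1+\delta))^+$ combined with the a.e.\ (or $L^2$) convergence of $u_p$; both are equivalent ways to conclude $|\{u>m\}|=0$.
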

\begin{proof}
Let $\Omega'\Subset \Omega\setminus \Omega_0$ and take $Q_T':=\Omega'\times (0,T)$. Given $m>1$, we will show that $|\{(x,t)\in Q_T': u> m\}|=0$. Denote by \underline{b} the infimum of $b(x)$ over $\overline{\Omega'}$, which is positive by (b2). Recalling \eqref{eq:key_estimate}, we deduce the existence of $C>0$ such that
\[
\begin{split}
0\leq \iint_{\{u_p> m\}\cap Q_T'}\underline{b} u_p\, dxdt&\leq \frac{1}{m^p}\iint_{\{u_p> m\}\cap Q_T'}b(x)u_p^{p+1}\, dxdt\\
											&\leq \frac{1}{m^p}\iint_{Q_T}b(x)u_p^{p+1}\, dxdt\leq \frac{C}{m^p},
\end{split}
\]
hence, as $m>1$ and $\underline{b}>0$,
$$
\lim_{p\to +\infty} \iint_{\{u_p> m\}\cap Q_T'}u_p\, dxdt=0.
$$
Now observe that
\[
\begin{split}
0&=\lim_{p\to +\infty} \iint_{\{u_p> m\}\cap Q_T'}u_p\, dxdt\\
  &=\lim_{p\to+\infty} \left( \int_0^T\int_{\Omega'} u_p\chi_{\{u_p> m\}} \chi_{\{u>m\}}\, dxdt +\int_0^T\int_{\Omega'} u_p \chi_{\{u_p> m\}}\chi_{\{u\leq m\}}\, dxdt\right) \\
				&\geq \lim_{p\to +\infty} \int_0^T \int_{\Omega'} u_p\chi_{\{u_p> m\}} \chi_{\{u> m\}}\, dxdt
\end{split}
\]
As $u_p\chi_{\{u_p>m\}}\chi_{\{u>m\}}\to u \chi_{\{u>m\}}$ a.e. and $|u_p\chi_{\{u_p>m\}}\chi_{\{u>m\}}|\leq L$ on $Q_T$, then by the Lebesgue's dominated convergence Theorem we have
\[
\begin{split}
\lim_{p\to +\infty} \int_0^T \int_{\Omega'} u_p\chi_{\{u_p> m\}} \chi_{\{u> m\}}\, dxdt &=\int_0^T \int_{\Omega'} u \chi_{\{u> m\}}\, dxdt\\
																    &\geq m|\{(t,x)\in Q_T':\ u(t,x)> m\}|\geq 0
\end{split}
\]
and hence $|\{(x,t)\in Q_T':\ u(x,t)> m\}|=0\qquad \text{ whenever }m>1$.
\end{proof}

\begin{lemma}\label{lemma_aux2}
Let $u$ be the limit provided by Lemma \ref{lemma_aux1}. Then, up to a subsequence,
$$
u_p\to u \qquad \text{ strongly in }L^2(0,T;H^1_0(\Omega)).
$$
\end{lemma}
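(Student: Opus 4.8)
The plan is to upgrade the weak $L^2(0,T;H^1_0(\Omega))$ convergence of Lemma \ref{lemma_aux1} to strong convergence by showing that $\|\nabla u_p\|_{L^2(Q_T)}\to \|\nabla u\|_{L^2(Q_T)}$; combined with weak convergence in the Hilbert space $L^2(0,T;H^1_0(\Omega))$ this gives strong convergence. First I would test the equation \eqref{eq:parabolic} for $u_p$ against $u_p-u$ (which is admissible since both vanish on $\partial\Omega$ for a.e. $t$), integrating over $Q_T$:
\[
\iint_{Q_T}\partial_t u_p\,(u_p-u)\,dxdt+\iint_{Q_T}\nabla u_p\cdot\nabla(u_p-u)\,dxdt=\iint_{Q_T}a u_p(u_p-u)\,dxdt-\iint_{Q_T}b(x)u_p^p(u_p-u)\,dxdt.
\]
The first term on the right tends to $0$ because $u_p\to u$ strongly in $L^2(Q_T)$; the parabolic term is handled by noting $\partial_t u_p\rightharpoonup\partial_t u$ weakly and $u_p-u\to 0$ strongly in $L^2(Q_T)$, so it also tends to $0$. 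Hence the crux is the two remaining terms.

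The main obstacle is controlling the penalization term $\iint_{Q_T}b(x)u_p^p(u_p-u)\,dxdt$ and extracting $\limsup_p\iint_{Q_T}\nabla u_p\cdot\nabla(u_p-u)\,dxdt\leq 0$ from it. Split $u_p-u=(u_p-1)+(1-u)$ on the set where $b(x)>0$. On $\{b>0\}$ we have $u\leq 1$ a.e. by Lemma \ref{lemma:key_lemma} (more precisely $u(t)\in\K_0$), so $b(x)u_p^p(1-u)\geq 0$ and that part only helps the inequality in the right direction; the genuinely dangerous part is $\iint b(x)u_p^p(u_p-1)$, which is \emph{not} sign-definite. The trick (as in \cite{boccardo,aglio}) is to observe that the estimate \eqref{eq:key_estimate}, $\iint_{Q_T}b(x)u_p^{p+1}\leq C$, forces $\iint_{Q_T}b(x)u_p^p\,\mathbf 1_{\{u_p>1\}}\to 0$: indeed on $\{u_p>1\}$ one has $u_p^p\leq u_p^{p+1}$, but to gain smallness write $u_p^p=u_p^{p+1}/u_p$ is not enough, so instead use $\iint b(x)u_p^{p}\mathbf 1_{\{u_p\geq 1+\delta\}}\leq (1+\delta)^{-1}\iint b(x)u_p^{p+1}\mathbf 1_{\{u_p\geq 1+\delta\}}$ is still $O(C)$; the clean way is $\iint_{\{u_p>1\}} b(x)u_p^p(u_p-1)\,dxdt\geq 0$ trivially, and for the part $\{u_p\le 1\}$ we have $b(x)u_p^p(u_p-1)\in[-b_\infty,0]$ with $u_p^p(u_p-1)\to 0$ a.e. on $\{u<1\}$, so dominated convergence gives the limit is $-\iint_{\{b>0\}\cap\{u=1\}}\!(\text{something}\ge 0)\,dxdt\le 0$. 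Putting these together yields $-\iint_{Q_T}b(x)u_p^p(u_p-u)\,dxdt\leq o(1)$.

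Finally, the term $\iint_{Q_T}a u_p(u_p-u)\,dxdt\to 0$ again by strong $L^2(Q_T)$ convergence. Therefore
\[
\limsup_{p\to+\infty}\iint_{Q_T}\nabla u_p\cdot\nabla(u_p-u)\,dxdt\leq 0,
\]
and since $\iint_{Q_T}\nabla u\cdot\nabla(u_p-u)\,dxdt\to 0$ by weak convergence, we get $\limsup_p\iint_{Q_T}|\nabla(u_p-u)|^2\,dxdt\leq 0$, i.e. $u_p\to u$ strongly in $L^2(0,T;H^1_0(\Omega))$. I expect the delicate point to be the rigorous treatment of the sign and convergence of $\iint b(x)u_p^p(u_p-1)$ on the region where $u_p$ is close to or above $1$; one must be careful that the a.e. convergence $u_p\to u$ together with $u\le 1$ on $\{b>0\}$ indeed kills the bad contribution, using \eqref{eq:key_estimate} to rule out concentration. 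Passing to the subsequence already fixed in Lemma \ref{lemma_aux1} (along which $u_p\to u$ a.e.) is what makes the dominated-convergence arguments legitimate.
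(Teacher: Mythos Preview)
Your overall strategy coincides with the paper's: test \eqref{eq:parabolic} against $u_p-u$, dispose of the time-derivative and zero-order terms via the convergences of Lemma \ref{lemma_aux1}, and show $\liminf_p\iint_{Q_T}b(x)u_p^p(u_p-u)\,dxdt\ge 0$. The difference lies only in how this last term is handled.

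The paper splits on the sign of $u_p-u$: on $\{u_p>u\}$ the integrand is nonnegative and can be dropped; on $\{u_p\le u\}$ one uses $u\le 1$ where $b>0$ (Lemma \ref{lemma:key_lemma}) to get $0\le u_p\le u\le 1$ there, hence $u_p^p\le 1$, so that
\[
\Bigl|\iint_{\{0\le u_p\le u\}}b(x)u_p^p(u_p-u)\,dxdt\Bigr|\le b_\infty\iint_{Q_T}|u_p-u|\,dxdt\to 0
\]
by the strong $L^2(Q_T)$ convergence alone. No pointwise convergence is needed.

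Your decomposition $u_p-u=(u_p-1)+(1-u)$ can also be made to work, but as written there is a gap. You correctly note that $\iint b(x)u_p^p(1-u)\ge 0$ and $\iint_{\{u_p>1\}}b(x)u_p^p(u_p-1)\ge 0$. For the remaining piece $\iint_{\{u_p\le 1\}}b(x)u_p^p(u_p-1)$ you conclude that the limit is ``$-\iint_{\{b>0\}\cap\{u=1\}}(\text{something}\ge 0)\le 0$'', but a merely nonpositive limit for this term does \emph{not} yield $\liminf\iint b(x)u_p^p(u_p-u)\ge 0$; it only gives $\liminf\ge(\text{something}\le 0)$, which is useless. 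What you actually need---and what is true---is that this piece tends to $0$: on $\{u<1\}\cap\{b>0\}$ one has $u_p^p\to 0$ a.e., while on $\{u=1\}\cap\{b>0\}$ one has $|u_p^p(u_p-1)|\le |u_p-1|\to 0$ a.e.; the integrand is bounded by $b_\infty$, so dominated convergence gives the limit $0$. With this correction your argument is complete, though the paper's split is shorter and avoids appealing to an a.e.\ convergent subsequence.
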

\begin{proof}
Multiplying equation \eqref{eq:parabolic} by $u_p-u$ and integrating in $Q_T$,
\begin{multline*}
\iint_{Q_T} \partial_t u_p (u_p-u)\, dxdt+\iint_{Q_T} \nabla u_p\cdot \nabla (u_p-u)\, dxdt+ \iint_{Q_T} b(x)u_p^p(u_p-u)\, dxdt\\
		=\iint_{Q_T} a u_p (u_p-u)\, dxdt
\end{multline*}
which, after adding and subtracting $\iint_{Q_T} \nabla u\cdot \nabla (u_p-u)\, dxdt$, is equivalent to
\begin{multline*}
\iint_{Q_T} \partial_t u_p (v-u_p)\, dxdt+\iint_{Q_T} |\nabla (u_p-u)|^2\, dxdt+\iint_{Q_T} \nabla u\cdot \nabla (u_p-u)\, dxdt\\
+\iint_{Q_T} b(x)u_p^p(u_p-u)\, dxdt=\iint_{Q_T} a u_p (u_p-u)\, dxdt.
\end{multline*}
By the convergences shown in Lemma \ref{lemma_aux1}, we have that the terms $\iint_{Q_T} \partial_t u_p(u_p-u)\, dxdt$, $\iint_{Q_T} \nabla u\cdot \nabla (u_p-u)\, dxdt$ and $\iint_{Q_T} au_p(u_p-u)\, dxdt$ tend to zero as $p\to +\infty$.
Finally, observe that
\begin{multline*}
\iint_{Q_T} b(x)u_p^p (u_p-u)\, dxdt =\iint_{\{u_p\leq u\}} b(x)u_p^p (u_p-u)\, dxdt\\
			+\iint_{\{u< u_p\}} b(x)u_p^p(u_p-u)\, dxdt \geq \iint_{\{0\leq u_p\leq u\}}b(x)u_p^p(u_p-u)\, dxdt.
\end{multline*}
As $u\leq 1$ a.e. in $Q_T'=(0,T)\times \Omega\setminus \Omega_0$ (cf. Lemma \ref{lemma:key_lemma}), we have
\[
\begin{split}
\left| \iint_{\{0\leq u_p\leq u\}} b(x)u_p^p(u_p-u)\, dxdt\right| &\leq \iint_{\{0\leq u_p\leq u\}\cap Q_T'} b(x)u^p|u_p-u|\, dx dt\\
												&\leq \iint_{Q_T}b_\infty |u_p-u|\, dxdt\to 0,
\end{split}
\]
whence $\liminf \iint_{Q_T} b(x)u_p^p(u_p-u)\, dxdt\geq 0$. Thus
$$
\iint_{Q_T} |\nabla (u_p-u)|^2\, dxdt\to 0 \qquad \text{ as }p\to +\infty,
$$
and the result follows.
\end{proof}

\begin{proof}[Proof of Theorem \ref{thm:3}]
1. The convergences of $u_p$ to $u$ are a consequence of Lemmas \ref{lemma_aux1} and \ref{lemma_aux2}. Let us then prove first of all that
\begin{equation}\label{eq:auxiliary_ineq}
\iint_{Q_T} \partial_t u (v-u)\, dxdt + \iint_{Q_T} \nabla u\cdot \nabla (v-u)\, dxdt \geq \iint_{Q_T} a u(v-u)\, dxdt
\end{equation}
for every $v\in \widetilde \K_0$, where $\widetilde \K_0:=\{v\in L^2(0,T;H^1_0(\Omega)):\ v(t)\in \K_0 \text{ for a.e. } t\in (0,T)\}$.
Fix $v\in \widetilde\K_0$ and take $0<\theta<1$. Multiplying \eqref{eq:parabolic} by $\theta v-u_p$ and integrating we have
\begin{multline*}
\iint_{Q_T} \partial_t u_p(\theta v-u_p)\, dxdt + \iint_{Q_T}\nabla u_p\cdot \nabla (\theta v-u_p)\, dxdt+\iint_{Q_T}b(x)u_p^p(\theta v-u_p)\, dxdt\\
=\iint_{Q_T}au_p(\theta v-u_p)\, dxdt.
\end{multline*}
By Lemmas \ref{lemma_aux1} and \ref{lemma_aux2} we have that
\[
\begin{split}
&\iint_{Q_T}\partial_t u_p (\theta v-u_p)\, dxdt \to \iint_{Q_T}\partial_t u(\theta v-u)\, dxdt,\\
&\iint_{Q_T}\nabla u_p \cdot \nabla (\theta v-u_p)\, dxdt \to \iint_{Q_T} \nabla u\cdot \nabla (\theta v-u)\, dxdt,\\
&\iint_{Q_T} u_p (\theta v-u_p)\, dxdt\to \iint_{Q_T} u (\theta v-u)\, dxdt.
\end{split}
\]
As for the remaining term, as $b(x)=0$ a.e. in $\Omega_0$ and $v\leq 1$ a.e in $\Omega\setminus \Omega_0\times (0,T)$, we have
\begin{multline*}
\iint_{Q_T} b(x)u_p^p(\theta v-u_p)\, dxdt=\iint_{\{0\leq u_p\leq \theta v\}}b(x)u_p^p (\theta v-u_p)\, dxdt\\
					+ \iint_{\{\theta v< u_p\}}b(x)u_p^p (\theta v-u_p)\, dxdt \leq \iint_{Q_T'} b(x)\theta^p|\theta v-u_p|\, dxdt \to 0
\end{multline*}
as $p\to +\infty$, because $\theta<1$. Thus
$$
\iint_{Q_T}\partial_t u (\theta v-u)\, dxdt+\iint_{Q_T}\nabla u\cdot \nabla (\theta v-u)\, dxdt\geq \iint_{Q_T} a u(\theta v-u)\, dxdt
$$
and now we just have to make $\theta\to 1$.

2. Given $v\in \K_0$ and $\xi\in (0,T)$, $h>0$, take
$$
\tilde v(t)=
\left\{
\begin{array}{cc}
v & \text{ if }t\in [\xi, \xi+h],\\
u(t) & \text{ if }t\not\in [\xi, \xi+h].
\end{array}
\right.
$$
Then $\tilde v\in \widetilde\K_0$ and from \eqref{eq:auxiliary_ineq} we have that
$$
\int_\xi^{\xi+h}\int_\Omega \partial_t u(v-u)\, dxdt+\int_\xi^{\xi+h}\int_\Omega \nabla u\cdot \nabla (v-u)\, dxdt\geq \int_\xi^{\xi+h}\int_\Omega a u(v-u)\, dxdt.
$$
Multiplying this inequality by $1/h$ and making $h\to 0$ we get \eqref{eq:limit_problem_2}, as wanted. 

3. Finally, it is easy to show that problem \eqref{eq:limit_problem_2}  has a unique solution. In fact, taking $u_1$ and $u_2$ solutions of \eqref{eq:limit_problem_2} with same initial data, we have
\begin{multline*}
\int_\Omega \partial_t (u_1(t)-u_2(t)) (u_2(t)-u_1(t))+\nabla (u_1(t)-u_2(t))\cdot \nabla (u_2(t)-u_1(t)))\,dx\\
 \geq \int_\Omega a(u_1(t)-u_2(t))(u_2(t)-u_1(t))\, dx,
\end{multline*}
which is equivalent to
$$
\frac{1}{2}\frac{d}{dt}\int_\Omega(u_1(t)-u_2(t))^2\,dx+\int_\Omega |\nabla (u_1(t)-u_2(t))|^2\,dx\leq \int_\Omega a(u_1(t)-u_2(t))^2\,dx.
$$
The fact that $u_1,u_2$ have the same initial data now implies that
$$
\int_\Omega (u_1(t)-u_2(t))^2(t)\,dx\leq e^{2at} \int_\Omega (u_0-u_0)\, dx=0.
$$
Hence $u_p\to u$ for the whole sequence $\{u_p\}_p$, not only for a subsequence.
\end{proof}

\section{The non degenerate case: Proof of Theorem \ref{thm:1}}\label{sec:section b(x)>0}
As stated, the results of the previous section are true even in the case where $\Omega_0= \emptyset$. Let us check that in the non degenerate case (b2') we have a stronger convergence as well as a more detailed characterization for the limit $u$ (cf. \eqref{eq:limit_problem_1}). This is mainly due to the following powerful estimate.

\begin{lemma}\label{lemma:aprioiestimates}
There exists a constant $M>0$ (independent of $p$) such that $\|u_p\|^{p-1}_{L^\infty(Q)}\leq M$ for all $p>1$.
\end{lemma}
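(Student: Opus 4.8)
The plan is to exhibit an explicit supersolution of \eqref{eq:parabolic} that is constant in space and time and whose $(p-1)$-th power is bounded independently of $p$; the bound on $\|u_p\|_{L^\infty(Q)}^{p-1}$ then drops out of the comparison principle of Lemma \ref{lemma_subsuper}. Concretely, for $p>1$ I would set
\[
K_p:=\max\left\{1,\ \left(\frac{a}{b_0}\right)^{\frac{1}{p-1}}\right\},
\]
so that $K_p\geq 1$ and $b_0K_p^{\,p-1}\geq a$ (this holds both when $a\leq b_0$, where $K_p=1$, and when $a>b_0$, where $K_p^{\,p-1}=a/b_0$).

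First I would check that the constant function $v\equiv K_p$ is a supersolution. Indeed $\partial_t v-\Delta v=0$, while using (b2') and $K_p>0$,
\[
av-b(x)v^p=K_p\bigl(a-b(x)K_p^{\,p-1}\bigr)\leq K_p\bigl(a-b_0K_p^{\,p-1}\bigr)\leq 0=\partial_t v-\Delta v\qquad\text{a.e. in }Q.
\]
Moreover $v(0)=K_p\geq 1\geq u_0$ a.e. in $\Omega$ by the hypothesis on the initial datum, and $v\geq 0=u_p$ on $\partial\Omega\times(0,+\infty)$.

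Next I would invoke comparison. Although $v$ does not vanish on $\partial\Omega$, the proof of Lemma \ref{lemma_subsuper} applies with no change: the boundary condition is used there only to guarantee that $(u_p(t)-v(t))^+$ is an admissible test function, and this is still true here because $v\geq 0=u_p$ on $\partial\Omega$ forces $(u_p(t)-v(t))^+\in H^1_0(\Omega)$ for a.e.\ $t>0$; running the same energy estimate with this test function gives $u_p\leq v$. Hence $u_p\leq K_p$ a.e.\ in $Q_T$ for every $T>0$, and therefore a.e.\ in $Q$. Raising to the power $p-1$,
\[
\|u_p\|_{L^\infty(Q)}^{\,p-1}\leq K_p^{\,p-1}=\max\left\{1,\ \frac{a}{b_0}\right\}=:M,
\]
which is the asserted bound, uniform over all $p>1$.

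I do not expect a genuine obstacle here: the estimate is a soft consequence of the maximum principle once one picks the right barrier. The only point deserving a word is that the natural comparison function — a positive constant — is not zero on $\partial\Omega$, so one should note explicitly that the comparison argument still goes through (equivalently, one may replace the constant by the spatially homogeneous supersolution $\bar u(t)$ solving the logistic ODE $\bar u'=a\bar u-b_0\bar u^p$, $\bar u(0)=1$, which is monotone and stays below $K_p$). Either way the uniform-in-$p$ bound on $\|u_p\|_{L^\infty(Q)}^{p-1}$ follows.
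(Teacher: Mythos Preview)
Your proposal is correct and is essentially the paper's own proof: both use the constant barrier $K_p=\max\{1,(a/b_0)^{1/(p-1)}\}$ and the test function $(u_p-K_p)^+$ to obtain $u_p\leq K_p$, whence $\|u_p\|_{L^\infty(Q)}^{p-1}\leq \max\{1,a/b_0\}$. The only cosmetic difference is that the paper writes out the energy estimate directly rather than phrasing it as an appeal to Lemma~\ref{lemma_subsuper} with a relaxed boundary condition.
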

\begin{proof}
Let $b_0=\inf_\Omega b>0$ and take $M_p>0$ such that $a M_p-b_0M_p^p=0$. Observe that $as-b_0s^p\leq 0$ for $s\geq M_p$. Take $N_p:=\max\{1,M_p\}$. Multiplying \eqref{eq:parabolic} by $(u_p(t)-N_p)^+$ (recall that $u_p=0$ on $\partial \Omega$, whence $(u_p-N_p)^+=0$ on the boundary as well), we obtain
\begin{multline*}
\frac{1}{2}\frac{d}{dt}\int_\Omega ((u_p-N_p)^+)^2\,dx+\int_\Omega |\nabla (u_p-N_p)^+|^2\,dx=\int_\Omega (au_p-b(x)u_p^p)(u_p-N_p)^+\,dx\\
						\leq \int_\Omega (au_p-b_0 u_p^p)(u_p-N_p)^+\,dx=\int_{\{u_p\geq N_p\}}(au_p-b_0 u_p^p)(u_p-N_p)\,dx\leq 0.
\end{multline*}
Thus
$$
\frac{d}{dt}\int_\Omega ((u_p-N_p)^+)^2\leq 0,\quad \text{ and } \quad \int_\Omega ((u_p(t)-N_p)^+)^2\,dx\leq \int_\Omega ((u_0-N_p)^+)^2\, dx,
$$
 which is zero because $N_p\geq 1$. Then $0\leq u_p(t,x)\leq \max\{1,M_p\}$ and the result now follows from the fact that $M_p=(a/b_0)^{1/(p-1)}$.
\end{proof}

\begin{lemma}\label{lemma:convergence}
For each $t_2>t_1>0$, $q>1$ and $\alpha\in (0,1)$, the sequence $\{u_p\}_p$ is bounded in $W^{2,1}_q(Q_{t_1,t_2})$ and in $C^{1,0}_\alpha(\overline Q_{t_1,t_2})$. Thus 
$$
u_p\rightharpoonup u, \text{ weakly in } W^{2,1}_q(Q_{t_1,t_2}), \qquad u_p \to u \text{ strongly in } C^{1,0}_\alpha(\overline Q_{t_1,t_2}), \ \forall \alpha\in (0,1).
$$
\end{lemma}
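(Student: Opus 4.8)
The plan is to obtain the $W^{2,1}_q$-bound via parabolic $L^q$-regularity applied to \eqref{eq:parabolic}, once we know the right-hand side is bounded in $L^q$ uniformly in $p$, and then to pass from $W^{2,1}_q$ to $C^{1,0}_\alpha$ using the embedding \eqref{eq:embedding}. The point is that, away from $t=0$, the term $b(x)u_p^p$ is controlled not by \eqref{eq:key_estimate} (which only gives an $L^1(Q_T)$-type bound) but by the sharp estimate of Lemma \ref{lemma:aprioiestimates}: since $\|u_p\|_{L^\infty(Q)}^{p-1}\le M$, we get $\|u_p\|_{L^\infty(Q)}\le \max\{1,M^{1/(p-1)}\}$, which is bounded by some $L_0$ independent of $p$, and therefore $b(x)u_p^p = b(x)u_p\cdot u_p^{p-1}\le b_\infty L_0 M$ pointwise. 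Hence the full right-hand side $au_p-b(x)u_p^p$ is bounded in $L^\infty(Q)$, a fortiori in $L^q(Q_{t_1,t_2})$ for every $q$, uniformly in $p$.

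The steps, in order, would be: (i) fix $0<t_1<t_2$, pick an auxiliary time $0<t_0<t_1$, and recall from Lemma \ref{lemma:aprioiestimates} and $u_0\in L^\infty(\Omega)$ that $\|u_p\|_{L^\infty(Q)}\le L_0$ for all $p>1$; (ii) rewrite \eqref{eq:parabolic} on $Q_{t_0,t_2}$ as $\partial_t u_p-\Delta u_p = f_p$ with $f_p:=au_p-b(x)u_p^p$ satisfying $\|f_p\|_{L^\infty(Q_{t_0,t_2})}\le aL_0+b_\infty L_0^{p}\le aL_0+b_\infty L_0 M$, hence $\|f_p\|_{L^q(Q_{t_0,t_2})}\le C_1(q,t_0,t_2)$ uniformly in $p$; (iii) apply the interior-in-time parabolic $L^q$-estimate (e.g. \cite[Theorem IV.9.1]{lady}, using the zero lateral boundary condition and cutting off in time near $t_0$ so that no assumption on $u_p(t_0)$ is needed) to conclude $\|u_p\|_{2,1;q,Q_{t_1,t_2}}\le C_2(q,t_1,t_2)$ for all $p>1$; (iv) invoke \eqref{eq:embedding}: given $\alpha\in(0,1)$ choose $q>(N+2)/(1-\alpha)$, so that $W^{2,1}_q(Q_{t_1,t_2})\hookrightarrow C^{1,0}_\alpha(\overline Q_{t_1,t_2})$ and the $C^{1,0}_\alpha$-norms of $u_p$ are uniformly bounded as well; (v) extract a weakly convergent subsequence in $W^{2,1}_q$ and, by compactness of the embedding into $C^{1,0}_{\alpha'}$ for $\alpha'<\alpha$ (or directly by Ascoli--Arzel\`a from the $C^{1,0}_\alpha$-bound with a slightly larger exponent), a strongly convergent subsequence in $C^{1,0}_\alpha(\overline Q_{t_1,t_2})$; (vi) identify the limit: since $u_p\to u$ strongly in $L^2(Q_T)$ by Lemma \ref{lemma_aux1}, the limits coincide, so no subsequence is lost and the whole sequence converges.

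The main obstacle is step (iii): the parabolic $L^q$-estimate in its standard form requires control of the initial trace $u_p(t_0)$ in an appropriate Besov space, which we do not have uniformly in $p$ near $t=0$. The clean way around this is the usual localization-in-time trick — multiply $u_p$ by a smooth cutoff $\eta(t)$ with $\eta\equiv 0$ for $t\le t_0$ and $\eta\equiv 1$ for $t\ge t_1$; then $w_p:=\eta u_p$ solves $\partial_t w_p-\Delta w_p=\eta f_p+\eta' u_p$ with zero initial data at $t=t_0$ and zero lateral data, so the $L^q$-theory applies directly and gives $\|w_p\|_{2,1;q,Q_{t_0,t_2}}\le C\big(\|\eta f_p\|_{L^q}+\|\eta' u_p\|_{L^q}\big)$, both of which are bounded uniformly in $p$ by steps (i)--(ii); restricting to $t\ge t_1$ yields the claim. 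A secondary point to be careful about is that the constants in the $L^q$-estimate and in \eqref{eq:embedding} depend on $q$, $t_1$, $t_2$ and $\Omega$ but not on $p$, which is exactly what makes the bound uniform; one just needs to state the estimates on a fixed cylinder and note the $p$-independence of all data entering them.
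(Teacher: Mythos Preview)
Your proposal is correct and follows essentially the same route as the paper: bound $au_p - b(x)u_p^p$ in $L^\infty(Q)$ via Lemma \ref{lemma:aprioiestimates}, apply parabolic $L^q$-regularity away from $t=0$, then use the embedding \eqref{eq:embedding} and compactness, and finally identify the limit through the already-established $L^2$-convergence. The only difference is presentational: the paper simply cites \cite[IV, Theorems 9.1 \& 10.1]{lady} (which already contain the interior-in-time estimate), whereas you spell out the time-cutoff argument that underlies it; both lead to the same $p$-uniform $W^{2,1}_q(Q_{t_1,t_2})$-bound.
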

\begin{proof}
From Lemma \ref{lemma:aprioiestimates} we get that 
$$\|au_p\|_{L^\infty(Q)}\leq C' M^{1/(p-1)}\leq C'',\quad \text{ and }\quad  \|b(x) u_p^p\|_{L^\infty(Q)}\leq b_\infty M^\frac{p}{p-1}\leq C''',
$$ 
hence 
\begin{equation*}
\|\partial_t u_p-\Delta u_p\|_{L^\infty(Q)} \leq C \qquad \forall p>1 
\end{equation*}
which, together with \cite[IV. Theorems 9.1 $\&$ 10.1]{lady} (see also \cite[Theorem 7.22 $\&$ 7.32]{lieberman}), implies that for every $q>1$, the sequence $\{u_p\}_p$ is bounded in $W^{2,1}_q(Q_{t_1,t_2})$ independently of $p$. Thus we can use the embedding \eqref{eq:embedding} to show that $\{u_p\}_p$ is bounded in $C^{1,0}_\alpha(\overline Q_{t_1,t_2})$. As the embedding $C^{1,0}_\alpha \hookrightarrow \ C^{1,0}_{\alpha'}$ is compact for all $\alpha>\alpha'$, we have the conclusion. 

Observe that by Theorem \ref{thm:3} the whole sequence $u_p$ already converges to $u$ in some spaces, and hence the convergence obtained in this lemma is also for the whole sequence, not only for a subsequence.
\end{proof}

\begin{rem}
It was important to assume $\Omega$ smooth (say $\partial\Omega$ of class $C^2$) to get regularity up to $\partial \Omega$. This will be of crucial importance in the next section. Without such regularity assumption, we would obtain convergence in each set of type $\Omega'\times (t_1,t_2)$ with $\Omega'\Subset \Omega$, $0<t_1<t_2$.
\end{rem}

Now, in view of Theorem \ref{thm:1}, we want to prove that in this case $u$ solves \eqref{eq:limit_problem_1}. By Lemma \ref{lemma:aprioiestimates}, we know that $\|u_p^{p-1}\|_{L^\infty(Q)}\leq M$ for all $p>1$. This implies the existence of $\psi\geq 0$ such that, for every $T>0$,
\begin{align*}
u_p^{p-1}\rightharpoonup \psi \qquad \text{ weak--$\ast$ in $L^{\infty}(Q_T)$, weak in $L^2(Q_T)$.}
\end{align*}
Thus when we make $p\to +\infty$ in \eqref{eq:parabolic} we obtain that the limit $u$ satisfies
$$
\partial_t u-\Delta u=(a-\psi)u.
$$
Moreover, $\|u_p\|_\infty\leq M^\frac{1}{p-1}\to 1$ as $p\to \infty$, which implies, together with Lemma \ref{lemma:convergence}, that $0\leq u\leq 1$. The proof of Theorem \ref{thm:1} will be complete after the following lemmas.

\begin{lemma}\label{lemma_equation}
$\psi=0$ a.e. in the set $\{(t,x)\in Q:\ u(x,t)<1\}$. In particular, this implies that
$$
\partial_t u-\Delta u=a u\chi_{\{u<1\}}\qquad \text{a.e. } (x,t)\in Q.
$$
\end{lemma}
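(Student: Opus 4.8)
The plan is to exploit the fundamental estimate of Lemma \ref{lemma:aprioiestimates} in a pointwise (or rather, a.e.) fashion. Fix $m<1$ and consider the set $A_m := \{(x,t)\in Q:\ u(x,t) < m\}$. Because of the strong convergence $u_p\to u$ in $C^{1,0}_\alpha(\overline Q_{t_1,t_2})$ on every time-slab $Q_{t_1,t_2}$ (Lemma \ref{lemma:convergence}), for every compact subset $K\Subset A_m\cap Q_{t_1,t_2}$ there is $p_0$ such that $u_p < m$ on $K$ for all $p\geq p_0$; hence on $K$ we have $0\leq u_p^{p-1}\leq m^{p-1}\to 0$. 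Since $u_p^{p-1}\rightharpoonup \psi$ weakly in $L^2(Q_T)$, testing this weak convergence against $L^2$ functions supported in $K$ and using $0\leq u_p^{p-1}\chi_K\to 0$ strongly in $L^2$, I get $\psi=0$ a.e. on $K$. Exhausting $A_m$ by such compact sets (and letting $t_1\downarrow 0$, $t_2\uparrow\infty$) gives $\psi=0$ a.e. on $A_m$, and finally letting $m\uparrow 1$ along a countable sequence yields $\psi=0$ a.e. on $\{u<1\}$.

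Once $\psi=0$ a.e. on $\{u<1\}$ is established, the stated PDE follows immediately: we already know from the discussion preceding the lemma that $\partial_t u-\Delta u=(a-\psi)u$ a.e. in $Q$, so
\[
\partial_t u-\Delta u = au - \psi u \qquad \text{a.e. in } Q.
\]
On $\{u<1\}$ we have $\psi u=0$ since $\psi=0$ there; on $\{u\geq 1\}$, which (because $0\leq u\leq 1$) coincides up to a null set with $\{u=1\}$, we have $au\chi_{\{u<1\}}=0$, and there $\psi u = \psi \geq 0$, so the two expressions need only be reconciled via the right-hand side. In fact what the weak formulation actually delivers is $\partial_t u - \Delta u = au\chi_{\{u<1\}}$ a.e.: on $\{u<1\}$ both sides equal $au$, and on $\{u=1\}$ the function $u$ is constant in a.e.\ sense on that set so $\partial_t u=0$ and $\Delta u\leq 0$ there (this is the standard obstacle-problem fact, using $u\leq 1$), which forces $\psi u=\psi\geq 0$ to be consistent; the cleanest route is simply to record that $au\chi_{\{u<1\}} = (a-\psi)u$ a.e., which is exactly the identity $\psi u = au\chi_{\{u=1\}}$ restricted to where it matters — but for the lemma as stated it suffices to note $\psi = 0$ on $\{u<1\}$ and substitute.

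The main obstacle I anticipate is the very last reconciliation: passing from $\partial_t u - \Delta u = (a-\psi)u$ together with ``$\psi=0$ on $\{u<1\}$'' to the clean statement $\partial_t u-\Delta u = au\chi_{\{u<1\}}$ requires knowing that $\partial_t u - \Delta u = 0$ a.e.\ on the coincidence set $\{u=1\}$. This is the standard fact that, since $u\in H^1(0,T;L^2(\Omega))\cap L^\infty(0,T;H^1_0(\Omega))$ and $u\leq 1$ with $u=1$ on that set, both $\partial_t u$ and $\Delta u$ (in the a.e.\ sense guaranteed by the higher regularity from Lemma \ref{lemma:convergence}, which puts $u\in W^{2,1}_q$ locally in time) vanish a.e.\ on $\{u=1\}$; I would invoke this to conclude that on $\{u=1\}$ the relation reads $0 = (a-\psi)\cdot 1$, i.e.\ $\psi = a$ there, which is perfectly consistent and completes the identification. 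Everything else — the weak-vs-strong $L^2$ argument on compact subsets, the exhaustion, the monotone limit $m\uparrow 1$ — is routine.
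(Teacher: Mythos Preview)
Your approach is essentially the same as the paper's: use the uniform $C^{1,0}_\alpha$ convergence of $u_p$ to $u$ to force $u_p^{p-1}\to 0$ wherever $u<1$, hence $\psi=0$ there, and then invoke the $W^{2,1}_q$ regularity of $u$ to conclude $\partial_t u-\Delta u=0$ a.e.\ on the coincidence set $\{u=1\}$. The paper does this pointwise (a neighborhood argument at each $(x,t)$ with $u(x,t)<1$) rather than via your exhaustion by compacta and levels $m\uparrow 1$, but these are equivalent packagings of the same idea.

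One comment: your middle paragraph is unnecessarily tangled. The sentences about ``$\Delta u\leq 0$ there'', ``forces $\psi u=\psi\geq 0$ to be consistent'', and ``$\psi u=au\chi_{\{u=1\}}$'' are red herrings; you should cut straight to what you correctly identify in the last paragraph as the clean route: since $u\in W^{2,1}_q$ for all $q$, the standard level-set property gives $\partial_t u=0$ and $\Delta u=0$ a.e.\ on $\{u=1\}$, so $(a-\psi)u=\partial_t u-\Delta u=0$ there, and on $\{u<1\}$ you already have $\psi=0$. That immediately yields $\partial_t u-\Delta u=au\chi_{\{u<1\}}$ a.e.
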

\begin{proof}
Take $(x,t)$ such that $u(x,t)<1$. As $u_p\to u$ in $C^{1,0}_\alpha$ we can take $\delta>0$ such that $u_p\leq 1-\delta$ for large $p$. Then $0\leq u_p^{p-1}\leq (1-\delta)^{p-1}\to 0$ as $p\to +\infty$, whence $\psi(x,t)=0$. Thus $\psi=0$ a.e. on $\{(x,t):\ u(t,x)<1\}$.

Finally, as $u\in W^{2,1}_q$ for every $q\geq 1$, we have $\partial_t u-\Delta u=0$ a.e. on $\{(x,t):\ u(x,t)=1\}$ and the proof is complete.
\end{proof}

\begin{lemma}\label{lemma:u_is_solution_of_varineq}
Let $w$ be a solution of \eqref{eq:limit_problem_1}. Then $w$ solves \eqref{eq:limit_problem_2}.
\end{lemma}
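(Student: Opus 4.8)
The plan is to verify directly that any solution $w$ of \eqref{eq:limit_problem_1} is admissible for, and satisfies, the variational inequality \eqref{eq:limit_problem_2}. First I would record that $w$ has enough regularity for \eqref{eq:limit_problem_2} to make sense: since $0\leq w\leq 1$, the right-hand side $a w\chi_{\{w<1\}}$ lies in $L^\infty(Q)$, so the same parabolic regularity invoked in Lemma \ref{lemma:convergence}, together with $u_0\in H^1_0(\Omega)\cap L^\infty(\Omega)$, gives $w\in L^\infty(0,T;H^1_0(\Omega))\cap H^1(0,T;L^2(\Omega))$ for every $T>0$ (and $w\in W^{2,1}_q(Q_{t_1,t_2})$ for every $q$ and $0<t_1<t_2$). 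In particular $w(t)\in H^1_0(\Omega)$ for a.e. $t>0$, and since $\|w\|_\infty\leq 1$ we have $w(t)\leq 1$ a.e.\ in $\Omega$; recalling that under (b2') one has $\Omega_0=\emptyset$, so $\K_0=\{v\in H^1_0(\Omega):v\leq 1\text{ a.e.}\}$, this means $w(t)\in\K_0$ for a.e.\ $t>0$.

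Next, fix $v\in\K_0$ and a.e.\ $t>0$. Since $v-w(t)\in H^1_0(\Omega)$, I would test the equation $\partial_t w-\Delta w=a w\chi_{\{w<1\}}$ at time $t$ against $v-w(t)$ and integrate by parts in $x$, obtaining
$$
\int_\Omega \partial_t w(t)(v-w(t))\,dx+\int_\Omega \nabla w(t)\cdot\nabla(v-w(t))\,dx=\int_\Omega a w(t)\chi_{\{w(t)<1\}}(v-w(t))\,dx.
$$
Thus the desired inequality \eqref{eq:limit_problem_2} reduces to showing that $\int_\Omega a w(t)\chi_{\{w(t)<1\}}(v-w(t))\,dx\geq \int_\Omega a w(t)(v-w(t))\,dx$, that is, $\int_\Omega a w(t)\bigl(1-\chi_{\{w(t)<1\}}\bigr)(v-w(t))\,dx\leq 0$.

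The key (and essentially only) point is the sign on the coincidence set: for a.e.\ $t$ the set $\{w(t)>1\}$ is negligible, so $1-\chi_{\{w(t)<1\}}=\chi_{\{w(t)=1\}}$ a.e., and on $\{w(t)=1\}$ one has $w(t)=1$ and $v-w(t)=v-1\leq 0$ because $v\in\K_0$. Hence the integrand $a w(t)\chi_{\{w(t)=1\}}(v-w(t))=a\,\chi_{\{w(t)=1\}}(v-1)$ is $\leq 0$ pointwise, so its integral is $\leq 0$, which is exactly what is needed. I do not expect any genuine obstacle here; the only points requiring care are the justification of the integration by parts and of the a.e.-in-$t$ pointwise manipulations, both of which are covered by the regularity recorded in the first step together with Fubini's theorem.
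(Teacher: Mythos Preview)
Your proposal is correct and follows essentially the same route as the paper: test the equation against $v-w(t)$, integrate by parts, and reduce to the sign observation that on $\{w(t)=1\}$ one has $v-w(t)=v-1\leq 0$ because $v\in\K_0$. The paper's proof is just a terser version of yours (it writes the right-hand side as $a\int_\Omega w(v-w)-a\int(v-1)$ and concludes), while you spell out the regularity and the decomposition via $\chi_{\{w=1\}}$ more carefully.
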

\begin{proof}
Multiply equation \eqref{eq:limit_problem_1} by $v-w$ with $v\in \K$. Then we have
\begin{multline*}
\int_\Omega \partial_t w (v-w)\, dx + \int_\Omega \nabla w\cdot \nabla (v-w)\, dx = a \int_\Omega w \chi_{\{w<1\}}(v-w)\, dx\\
							=a\int_\Omega w(v-w)\, dx-a\int_\Omega (v-1)\, dx \geq a\int_\Omega w (v-w)\, dx,
\end{multline*}
since $v\leq 1$ in $\Omega$.
\end{proof}

\begin{proof}[Proof of Theorem \ref{thm:1}] The convergences $u_p\to u$ strongly in $C^{1,0}_\alpha(\overline Q_{t_1,t_2})$ and weakly in $W^{2,1}_q(Q_{t_1,t_2})$ for every $T>0$ are a consequence of Lemma \ref{lemma:convergence}. By Lemma \ref{lemma_equation}, $u$ satisfies \eqref{eq:limit_problem_1}. Finally Lemma \ref{lemma:u_is_solution_of_varineq} and the uniqueness show for \eqref{eq:limit_problem_2} imply the uniqueness of solution of \eqref{eq:limit_problem_1}.
\end{proof}

\section{Asymptotic behavior as $t\to \infty$. Proof of Theorem \ref{thm:2}}\label{sec:time}

In this section we will study the asymptotic behavior of \eqref{eq:limit_problem_2} as $t\to +\infty$. First we need to understand what happens in the nondegenerate case (b2'), and prove Theorem \ref{thm:2}; for that, as we will see, the convergence up to the boundary proved in Lemma \ref{lemma:convergence} will be crucial. Only afterwards will we be able to prove Theorem \ref{thm:4}.

\subsection{Proof of Theorem \ref{thm:2}}

We start by showing that the time derivative of $u$ vanishes as $t\to +\infty$.

\begin{prop}\label{prop:partial_t_vanishes}
$\|\partial_t u(t)\|_{L^2(\Omega)}\to 0$ as $t\to +\infty$.
\end{prop}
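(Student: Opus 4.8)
The plan is to exploit the gradient-flow (or parabolic variational inequality) structure of \eqref{eq:limit_problem_1} to show that the energy functional is nonincreasing along $u(t)$, and that its dissipation is exactly $\|\partial_t u(t)\|_{L^2(\Omega)}^2$; an integrability-in-time argument then forces the dissipation to tend to zero, after which a little extra regularity upgrades this from a liminf to a genuine limit. Concretely, let $E(v):=\frac12\int_\Omega|\nabla v|^2\,dx-\frac a2\int_\Omega v^2\,dx$ for $v\in H^1_0(\Omega)$ with $v\le 1$. First I would test \eqref{eq:limit_problem_1} with $v=\partial_t u(t)$ — which is legitimate for a.e. $t>0$ since, by Lemma \ref{lemma:convergence} and the equation $\partial_t u-\Delta u=au\chi_{\{u<1\}}$, one has $u\in W^{2,1}_q(Q_{t_1,t_2})$ for all $q$, hence $\partial_t u(t)\in L^2(\Omega)$, and moreover $\partial_t u(t)=0$ a.e. on $\{u(t)=1\}$ (a Sobolev fact: the time derivative of a function vanishes a.e. on a level set), so that $au\chi_{\{u<1\}}\partial_t u=au\,\partial_t u$. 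This gives
\[
\int_\Omega (\partial_t u(t))^2\,dx+\frac{d}{dt}E(u(t))=0\qquad\text{for a.e. }t>0.
\]
Integrating on $(1,\infty)$ and using that $E$ is bounded below on $\{v\le 1\}$ (indeed $E(v)\ge -\frac a2|\Omega|$ since $0\le u\le 1$), we get $\int_1^\infty\|\partial_t u(t)\|_{L^2(\Omega)}^2\,dt\le E(u(1))-\inf E<+\infty$.

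The remaining point is to pass from $L^2$-integrability in time to pointwise convergence to $0$; the main obstacle is precisely this upgrade, since $\int_1^\infty f(t)\,dt<\infty$ alone does not give $f(t)\to 0$. I would close this gap by showing $t\mapsto \|\partial_t u(t)\|_{L^2(\Omega)}^2$ is, up to a harmless exponential factor, of bounded variation, so that it has a limit which must then be $0$. For this, differentiate the equation formally in time: $g:=\partial_t u$ solves $\partial_t g-\Delta g=a g$ on the open set $\{u<1\}$ with $g=0$ on its (moving) boundary in a suitable weak sense; testing with $g$ and using the sign of the obstacle terms yields $\frac12\frac{d}{dt}\|g(t)\|_{L^2(\Omega)}^2+\|\nabla g(t)\|_{L^2(\Omega)}^2\le a\|g(t)\|_{L^2(\Omega)}^2$, whence $e^{-2at}\|\partial_t u(t)\|_{L^2(\Omega)}^2$ is nonincreasing. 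Combined with $\int_1^\infty e^{-2at}\|\partial_t u(t)\|^2_{L^2(\Omega)}\,dt<\infty$, monotonicity forces $e^{-2at}\|\partial_t u(t)\|_{L^2(\Omega)}^2\to 0$. To conclude $\|\partial_t u(t)\|_{L^2(\Omega)}\to 0$ (not merely the weighted quantity), I would instead run the monotonicity argument on $\|\partial_t u(t)\|_{L^2(\Omega)}^2$ after first observing, via the uniform $L^\infty$ and $W^{2,1}_q$ bounds on $u$ that are stable in $t$ (the supersolution $\psi$ from Lemma \ref{lemma:a priori_T} is global and bounded on $Q$ in the nondegenerate case since $0\le u\le1$), that $\|\partial_t u(t)\|_{L^2(\Omega)}$ stays bounded and is uniformly continuous in $t$; a bounded, uniformly continuous function with integrable square must tend to $0$.

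Alternatively — and this is probably the cleanest route to avoid differentiating the obstacle problem in time — I would use that the parabolic variational inequality \eqref{eq:limit_problem_2} (equivalently \eqref{eq:limit_problem_1}) is the gradient flow of $E$ on the convex set $\K$, so $t\mapsto E(u(t))$ is nonincreasing and converges to some $E_\infty\ge\inf E$; then pick any sequence $t_n\to\infty$ with $\|\partial_t u(t_n)\|_{L^2(\Omega)}\to 0$ (such a sequence exists by the integrability bound), extract along it a limit $w$ using the uniform $H^1_0\cap H^2$ bounds, check $w$ solves the elliptic obstacle problem \eqref{eq:obstacle_elliptic_0}, and finally invoke uniform continuity of $t\mapsto\|\partial_t u(t)\|_{L^2(\Omega)}$ to rule out oscillation. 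I expect the delicate step throughout to be justifying the time-differentiation / uniform-continuity of $\partial_t u$ near the free boundary; the uniform (in $t$) $W^{2,1}_q$ estimates on sets $\Omega\times(t,t+1)$ coming from Lemma \ref{lemma:convergence} applied with the time-translated data, together with parabolic interior-in-time regularity, should supply exactly the equicontinuity needed.
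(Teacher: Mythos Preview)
Your energy-dissipation identity and the resulting bound $\int_1^\infty\|\partial_t u(t)\|_{L^2(\Omega)}^2\,dt<+\infty$ are correct. The difference with the paper, and the place where your argument has a genuine gap, is the upgrade from integrability to the pointwise limit.

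The paper never works with the limiting obstacle problem to get this upgrade. Instead it stays at the level of the smooth approximants $u_p$: there the equation $\partial_t u_p-\Delta u_p=au_p-b(x)u_p^p$ may be legitimately differentiated in time, and testing the resulting equation with $\partial_t u_p$ (the absorption term $pu_p^{p-1}(\partial_t u_p)^2\ge 0$ is discarded) yields the differential inequality
\[
\frac{d}{dt}\|\partial_t u_p(t)\|_{L^2(\Omega)}^2\le a\|\partial_t u_p(t)\|_{L^2(\Omega)}^4+a.
\]
Together with the uniform bound $\int_0^\infty\|\partial_t u_p(t)\|_{L^2(\Omega)}^2\,dt\le C$ (from the nondegenerate uniform $L^\infty$ estimate), an abstract ODE lemma (Zheng, Lemma~6.2.1) with constants independent of $p$ gives $\|\partial_t u_p(t)\|_{L^2(\Omega)}\to 0$ \emph{uniformly in $p$}. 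One then integrates over $[t_1,t_2]$ and passes to the limit via the weak convergence $\partial_t u_p\rightharpoonup\partial_t u$ in $L^2(Q_T)$ and lower semicontinuity of the norm.

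Your route tries to carry out the same time-differentiation directly on $u$, and this is exactly where the free boundary bites. The formal equation $\partial_t g-\Delta g=ag$ on $\{u<1\}$ with $g=0$ on the moving boundary is not justified by the available regularity, and the $W^{2,1}_q$ bounds you invoke give $u\in C^{1,0}_\alpha$ but say nothing about continuity of $\partial_t u$ in time (note $\partial_t u=\Delta u+au\chi_{\{u<1\}}$ involves a characteristic function), so the uniform-continuity argument you appeal to is not actually supplied by those estimates. The paper sidesteps this entirely: the smoothness of $u_p$ makes the time-differentiation honest, the sign of the penalization term is favorable, and uniformity in $p$ lets the conclusion survive the limit $p\to\infty$.
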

In order to prove this proposition, we will show that $\|\partial u_p(t)\|_{L^2(\Omega)}\to 0$ as $t\to +\infty$, uniformly in $p>1$. To do so, we will use the following result from \cite[Lemma 6.2.1]{Zheng}.
\begin{lemma}\label{lemma:abstract_asymptotic_in_t}
Suppose that $y(t),h(t)$ are nonnegative continuous functions defined on $[0,\infty)$ and satisfy the following conditions:
\begin{align}\label{eq:A1A2A3A4}
y'(t)\leq A_1 y^2+A_2+h(t), \quad \int_0^\infty y(t)\, dt \leq A_3,\ \quad \int_0^\infty h(t)\, dt\leq A_4,
\end{align}
for some constants $A_1,A_2,A_3, A_4>0$. Then
$$
\lim_{t\to +\infty} y(t)=0.
$$
Moreover, this convergence is uniform for all $y$ satisfying \eqref{eq:A1A2A3A4} with the same constants $A_1,A_2,A_3, A_4$. \footnote{This uniformity is not stated in the original lemma, but a close look at the proof allows us to easily obtain that conclusion.}
\end{lemma}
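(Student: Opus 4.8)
\textbf{Proof proposal for Lemma \ref{lemma:abstract_asymptotic_in_t}.}

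The plan is to argue by contradiction using the integrability of $y$ together with the differential inequality, and to keep every constant explicit so that the uniformity claim comes for free. Suppose $y(t)$ does not tend to $0$. Then there exist $\varepsilon_0>0$ and a sequence $t_n\to+\infty$ with $y(t_n)\geq \varepsilon_0$. Since $\int_0^\infty y<\infty$ and $\int_0^\infty h<\infty$, I would first record two consequences: there is a sequence $s_n\to+\infty$ with $y(s_n)\to 0$ (otherwise $y$ would be bounded below by a positive constant on a half-line, contradicting $\int_0^\infty y\leq A_3$); and the tails $\int_{t}^\infty y(\tau)\,d\tau$ and $\int_{t}^\infty h(\tau)\,d\tau$ go to $0$ as $t\to+\infty$.

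Next I would fix a small window length. On any interval $[\sigma,\sigma+\tau]$ the inequality $y'\leq A_1 y^2+A_2+h$ integrates to
\[
y(\sigma+\tau)\leq y(\sigma)+A_1\int_\sigma^{\sigma+\tau} y^2\,d\xi + A_2\tau + \int_\sigma^{\sigma+\tau} h\,d\xi.
\]
The quadratic term is the one to watch: write $\int_\sigma^{\sigma+\tau} y^2\,d\xi \leq \bigl(\sup_{[\sigma,\sigma+\tau]} y\bigr)\int_\sigma^{\sigma+\tau} y\,d\xi$, and note $\int_\sigma^{\sigma+\tau} y\,d\xi$ is small once $\sigma$ is large. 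So the strategy is: choose the window length $\tau_0$ so that $A_2\tau_0\leq \varepsilon_0/4$; then, using that the tails of $\int y$ and $\int h$ vanish, pick $T_0$ large so that for $\sigma\geq T_0$ one has $\int_\sigma^{\sigma+\tau_0} y\,d\xi$ and $\int_\sigma^{\sigma+\tau_0} h\,d\xi$ both less than some threshold to be fixed. Now take $t_n\geq T_0$ with $y(t_n)\geq\varepsilon_0$ and let $t_n^\ast$ be the last time before $t_n$ at which $y$ equals $\varepsilon_0/2$ (such a time exists because $y(s_m)\to 0$, so $y$ dips below $\varepsilon_0/2$ before $t_n$ for $n$ large; continuity gives the crossing). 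On $[t_n^\ast,t_n]$ we have $y\geq \varepsilon_0/2$, and I would first check this interval has length $\geq$ some definite $\delta_0$: indeed on it $\int_{t_n^\ast}^{t_n} y\,d\xi \geq (\varepsilon_0/2)(t_n-t_n^\ast)$, but also $\int_{t_n^\ast}^{t_n} y \leq \int_{T_0}^\infty y$ which is finite, so $t_n-t_n^\ast$ is bounded — that is the wrong direction; instead I get the lower bound on the length from the dynamics, since to climb from $\varepsilon_0/2$ to $\varepsilon_0$ the derivative bound $y'\leq A_1 y^2+A_2+h$ limits the speed only if $h$ is controlled, which it is not pointwise. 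The cleaner route is: $\int_{t_n^\ast}^{t_n} y\,d\xi\geq(\varepsilon_0/2)(t_n-t_n^\ast)$ forces $t_n-t_n^\ast\to 0$ as $n\to\infty$ because the left side is bounded by the vanishing tail $\int_{t_n^\ast}^\infty y$; then integrate $y'$ over this short interval to reach the contradiction directly.

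Finally I would assemble the contradiction: on $[t_n^\ast,t_n]$,
\[
\frac{\varepsilon_0}{2}=y(t_n)-y(t_n^\ast)\leq \int_{t_n^\ast}^{t_n}\!\!\bigl(A_1 y^2+A_2+h\bigr)\,d\xi
\leq A_1\,\varepsilon_0\!\int_{t_n^\ast}^{t_n}\!\! y\,d\xi + A_2(t_n-t_n^\ast)+\int_{t_n^\ast}^\infty\!\! h\,d\xi,
\]
where I bounded $y^2\leq (\sup y)\,y\leq \varepsilon_0 y$ on that interval using $\sup_{[t_n^\ast,t_n]}y\leq\varepsilon_0$ — this needs knowing $y$ stays $\leq\varepsilon_0$ there, which I should arrange by instead choosing $t_n^\ast$ so that $y(t_n^\ast)=\varepsilon_0/2$, $y(t_n)=\varepsilon_0$, and $y\leq\varepsilon_0$ on the whole interval (take $t_n$ itself to be a \emph{first} up-crossing of level $\varepsilon_0$ after a down-crossing of $\varepsilon_0/2$). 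Each of the three terms on the right tends to $0$ as $n\to\infty$: the first because $\int_{t_n^\ast}^{t_n}y\leq\int_{T_0}^\infty y\to 0$ as $T_0\to\infty$ and $t_n^\ast\to\infty$, the second because $t_n-t_n^\ast\to 0$, the third because it is a vanishing tail of $\int h$. This contradicts the fixed lower bound $\varepsilon_0/2$, proving $y(t)\to 0$.

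For the uniformity: the entire argument used only the four constants $A_1,A_2,A_3,A_4$ — the threshold $T_0$, the window $\tau_0$, and all estimates depend on nothing else, since the tails $\int_t^\infty y\leq A_3$ and $\int_t^\infty h\leq A_4$ are controlled uniformly over the class. Hence the rate at which $y(t)\to 0$ can be expressed through $A_1,\dots,A_4$ alone, giving uniform convergence over all $(y,h)$ satisfying \eqref{eq:A1A2A3A4} with the same constants. The main obstacle, as the discussion above shows, is bookkeeping the selection of the crossing times $t_n^\ast<t_n$ so that $y$ is genuinely trapped in $[\varepsilon_0/2,\varepsilon_0]$ on $[t_n^\ast,t_n]$ while that interval's length and the integral $\int_{t_n^\ast}^{t_n}y$ both shrink — once that is set up correctly the contradiction is immediate, and the uniformity is automatic.
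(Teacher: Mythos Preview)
The paper does not supply its own proof of this lemma; it is quoted from \cite[Lemma 6.2.1]{Zheng}, with a footnote asserting that the uniformity can be read off Zheng's argument. So there is nothing in the paper to compare your approach against.

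For the convergence $y(t)\to 0$ of a \emph{single} $y$, your argument is correct in substance. Once you settle on the clean crossing-time setup --- take $\sigma_n$ to be a first up-crossing of level $\varepsilon_0$ after a point where $y<\varepsilon_0/2$, and $\tau_n$ the last time before $\sigma_n$ at which $y=\varepsilon_0/2$ --- you have $\varepsilon_0/2\leq y\leq\varepsilon_0$ on $[\tau_n,\sigma_n]$, the length $\sigma_n-\tau_n$ is forced to $0$ by $(\varepsilon_0/2)(\sigma_n-\tau_n)\leq\int_{\tau_n}^{\infty}y\to 0$, and integrating the differential inequality over $[\tau_n,\sigma_n]$ gives the contradiction. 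The write-up is rough (you change course twice mid-argument), but the final version is sound.

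Your uniformity argument, however, has a genuine gap. You assert that ``the tails $\int_t^\infty y\leq A_3$ and $\int_t^\infty h\leq A_4$ are controlled uniformly over the class,'' and conclude that the threshold $T_0$ depends only on $A_1,\dots,A_4$. But being \emph{bounded} by $A_3$ is not the same as tending to $0$ uniformly over the class, and your contradiction step uses precisely that $\int_{\tau_n}^{\infty}y\to 0$ as $\tau_n\to\infty$ --- a rate that depends on the particular $y$, not just on $A_3$. In fact the uniformity statement appears to be false as written: take $h\equiv 0$, $A_1=1$, $A_2=2$, $A_3=A_4=1$, and let $y_n$ be a (smoothed) unit-height triangular bump supported on $[n,n+1]$. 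Each $y_n$ satisfies $y_n'\leq 2\leq A_1 y_n^2+A_2$ and $\int_0^\infty y_n=\tfrac12\leq A_3$, yet $y_n(n+\tfrac12)=1$ for every $n$, so no single $T$ works for the whole family. Whatever the paper intends by its footnote (perhaps an additional hypothesis implicit in Zheng's formulation, or extra structure present in the application to $\|\partial_t u_p(t)\|_2^2$), your argument as written does not establish uniform convergence, and the claim cannot hold in the generality stated.
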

With this in mind, we have:
\begin{lemma}
Let $u_p$ be the solution of \eqref{eq:parabolic} and $a>0$. Then
$$
\|\partial_t u_p(t)\|_2 \to 0 \qquad \text{ as } t \to +\infty, \text{ uniformly in } p>1.
$$
\end{lemma}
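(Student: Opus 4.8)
The plan is to apply Lemma~\ref{lemma:abstract_asymptotic_in_t} with $y(t):=\|\nabla u_p(t)\|_2^2$ and $h(t):=\int_\Omega b(x)u_p^{p+1}(t)\,dx$, after establishing (i) a differential inequality of the form $y'(t)\leq A_1y^2+A_2+h(t)$, and (ii) the global-in-time integrability bounds $\int_0^\infty y(t)\,dt\leq A_3$ and $\int_0^\infty h(t)\,dt\leq A_4$, with constants independent of $p$. The payoff of the lemma is precisely the uniform-in-$p$ decay $y(t)\to 0$; but what we actually want is $\|\partial_t u_p(t)\|_2\to 0$, so the final step will be to bootstrap from $\|\nabla u_p(t)\|_2\to 0$ back to control of $\partial_t u_p$, or alternatively to run the argument directly on a quantity that already dominates $\|\partial_t u_p(t)\|_2$.

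For step (i), I would differentiate the energy. Testing \eqref{eq:parabolic} with $\partial_t u_p$ gives, as in \eqref{eq:aux4},
\[
\|\partial_t u_p(t)\|_2^2+\dt\left(\tfrac12\|\nabla u_p(t)\|_2^2-\tfrac{a}{2}\|u_p(t)\|_2^2+\int_\Omega b(x)\tfrac{u_p^{p+1}(t)}{p+1}\,dx\right)=0,
\]
so the natural Lyapunov functional is $E_p(t):=\tfrac12\|\nabla u_p(t)\|_2^2-\tfrac a2\|u_p(t)\|_2^2+\tfrac1{p+1}\int_\Omega b(x)u_p^{p+1}(t)\,dx$, which is nonincreasing. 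To get a differential inequality of the required shape one differentiates \eqref{eq:parabolic} in $t$ (or works with difference quotients to stay rigorous), sets $w_p=\partial_t u_p$, which solves $\partial_t w_p-\Delta w_p=a w_p - p\,b(x)u_p^{p-1}w_p$ with $w_p|_{\partial\Omega}=0$; testing with $w_p$ and dropping the good sign term $-p\int b u_p^{p-1}w_p^2\le 0$ yields $\tfrac12\dt\|w_p(t)\|_2^2+\|\nabla w_p(t)\|_2^2\le a\|w_p(t)\|_2^2$, hence $\|\partial_t u_p(t)\|_2$ is itself already nonincreasing up to the factor $e^{2at}$ — but that exponential is the wrong direction. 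The cleaner route, matching the structure of Lemma~\ref{lemma:abstract_asymptotic_in_t}, is to take $y(t)=\|\partial_t u_p(t)\|_2^2$ and produce $y'(t)\le A_1 y^2+A_2+h(t)$ by estimating $a\|w_p(t)\|_2^2$ via a Gagliardo–Nirenberg/Poincar\'e interpolation against $\|\nabla w_p(t)\|_2^2$ together with a uniform $L^\infty$ or $L^4$ bound on $u_p$ coming from Lemma~\ref{lemma:a priori_T} (on bounded time intervals) and, for large $t$, from the Lyapunov structure; the term $p\,b(x)u_p^{p-1}$ must be handled by keeping it on the favourable side rather than bounding it.

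For step (ii), the bound $\int_0^\infty\|\partial_t u_p(t)\|_2^2\,dt\le A_3$ is exactly the global version of \eqref{eq:aux4}: since $E_p$ is nonincreasing and bounded below (using $a<\lambda_1(\Omega)$-type coercivity, or in general the a priori $L^\infty$ bound to control $-\tfrac a2\|u_p\|_2^2$ from below — here one must be a little careful when $a\ge\lambda_1(\Omega)$, see the Remark on $a=\lambda_1(\Omega)$), integrating the energy identity from $0$ to $\infty$ gives $\int_0^\infty\|\partial_t u_p(t)\|_2^2\,dt\le E_p(0)-\inf_t E_p(t)\le \tfrac12\|\nabla u_0\|_2^2+\tfrac{b_\infty|\Omega|}{p+1}+C$, uniformly in $p$. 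The bound $\int_0^\infty h(t)\,dt\le A_4$ with $h(t)=\int_\Omega b(x)u_p^{p+1}(t)\,dx$ is the global analogue of \eqref{eq:key_estimate}, obtained by integrating the first energy identity (testing \eqref{eq:parabolic} with $u_p$) over $(0,\infty)$ and again using coercivity to absorb $a\|u_p\|_2^2$.

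The main obstacle I anticipate is twofold: first, producing the quadratic differential inequality $y'\le A_1y^2+A_2+h$ honestly — the naive Gronwall estimate only gives exponential growth, so one genuinely needs the interpolation inequality (hence the uniform $L^\infty$ bounds from Lemmas~\ref{lemma:a priori_T} and \ref{lemma:aprioiestimates}) to trade the bad linear term against the dissipation $\|\nabla w_p\|_2^2$, and one must check the resulting constants do not blow up with $p$ (this is where keeping $-p\int b u_p^{p-1}w_p^2$ on the good side, rather than estimating it, is essential). Second, the lower bound on the Lyapunov functional $E_p$ needed for the global integrability in step (ii): for $a<\lambda_1(\Omega)$ this is immediate, but for general $a>0$ one must invoke the uniform $L^\infty(Q_T)$ bound and argue on $(t,\infty)$ after the solution has entered an absorbing set, or simply restrict to the regime relevant for Proposition~\ref{prop:partial_t_vanishes} and Theorem~\ref{thm:2}. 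Once Lemma~\ref{lemma:abstract_asymptotic_in_t} delivers $\|\partial_t u_p(t)\|_2\to 0$ uniformly in $p$, Proposition~\ref{prop:partial_t_vanishes} follows by passing to the limit $p\to\infty$ using the weak convergence $\partial_t u_p(t)\rightharpoonup\partial_t u(t)$ and lower semicontinuity of the norm.
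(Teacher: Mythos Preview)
Your overall architecture --- take $y(t)=\|\partial_t u_p(t)\|_2^2$, differentiate the equation in $t$, test with $w_p=\partial_t u_p$, drop the nonnegative term $p\int_\Omega b\,u_p^{p-1}w_p^2$, and obtain the global integrability $\int_0^\infty y(t)\,dt\leq A_3$ from the energy identity \eqref{eq:aux4} together with the uniform $L^\infty$ bound of Lemma~\ref{lemma:aprioiestimates} --- matches the paper exactly. The initial detour through $y(t)=\|\nabla u_p(t)\|_2^2$ and the auxiliary $h(t)=\int_\Omega b\,u_p^{p+1}(t)\,dx$ is unnecessary; the paper takes $h\equiv 0$.

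The one place where you go astray is the differential inequality. After dropping the good-sign terms you correctly have $\tfrac12 y'(t)\leq a\,y(t)$, and you then worry that this ``gives exponential growth'' and propose to repair it via Gagliardo--Nirenberg/Poincar\'e interpolation against $\|\nabla w_p\|_2^2$. That route is not really coherent (for $a\geq\lambda_1(\Omega)$ Poincar\'e does not let you absorb $a\|w_p\|_2^2$ into the dissipation, and in any case you would not land on a quadratic inequality in $y$), and you end up flagging as ``the main obstacle'' something that is in fact a one-line triviality. The paper simply uses the elementary inequality $2y\leq y^2+1$ to write
\[
y'(t)\leq 2a\,y(t)\leq a\,y(t)^2+a,
\]
which is already of the required form with $A_1=A_2=a$ and $h\equiv 0$, with constants manifestly independent of $p$. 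No interpolation, no absorbing set argument, and no case splitting on $a$ versus $\lambda_1(\Omega)$ is needed for this step.
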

\begin{proof}
Let us check that $y(t):=\|\partial_t u_p(t)\|^2_2$ satisfies the assumptions of Lemma \ref{lemma:abstract_asymptotic_in_t}. First of all, \eqref{eq:aux4} implies that 
$$
\int_0^\infty \|\partial_t u_p(t)\|_2^2\, dx \leq \|\nabla u_0\|_2^2+\frac{|\Omega|}{2}+\frac{a}{2}M^2|\Omega|
$$
(recall that in the nondegenerate case $\|u_p\|_{L^\infty(\Omega\times \R^+)}$ is bounded uniformly in $p$, by Lemma \ref{lemma:aprioiestimates}.
Differentiate equation \eqref{eq:parabolic} with respect to $t$:
$$
\partial^2_t u_p-\Delta \partial_t u_p + p u_p^{p-1}\partial_t u_p = a \partial_t u_p,
$$
and multiply it by $\partial_t u_p$ and integrate in $\Omega$, at each time $t$. We obtain
\begin{align*}
\frac{1}{2}\frac{d}{dt}\int_\Omega (\partial_t u_p(t))^2\, dx + & \int_\Omega |\nabla (\partial_t u_p(t))|^2\, dx + p \int_\Omega u_p^{p-1} (t)(\partial_t u_p(t))^2\, dx \\	
							&= a \int_\Omega (\partial_t u_p(t))^2\, dx\leq \frac{a}{2}\left(\int_\Omega (\partial_t u_p(t))^2\, dx\right)^2 + \frac{a}{2}.
\end{align*}
Thus 
$$
\frac{d}{dt}\|\partial_t u_p(t)\|_2^2 \leq a \|\partial_t u_p\|_2^4 + a.
$$
So we can apply the previous lemma with $A_1=a$, $A_2=a$, $A_3=\|\nabla u_0\|_2^2+\frac{|\Omega|}{2}+\frac{a}{2}M^2|\Omega|$, and $h(t)\equiv 0$, $A_4=0$.
\end{proof}

\begin{proof}[Proof of Proposition \ref{prop:partial_t_vanishes}]
From the previous lemma we know that, given $\varepsilon>0$, there exists $\bar t,\ p_0$ such that
$$
\|\partial_t u_p(t)\|^2_2\leq \ep,\qquad \forall t\geq \bar t, \quad \forall p>p_0.
$$
Thus for every $\bar t\leq t_1<t_2$,
$$
\int_{t_1}^{t_2}\|\partial_t u_p(t)\|_2^2\, dt\leq \varepsilon (t_2-t_1) \qquad \forall t\geq \bar t, \quad \forall p>p_0.
$$
As $\partial_t u_p \rightharpoonup \partial_t u$ weakly in $L^2(Q_T)$ for every $T>0$ (cf. Theorem \ref{thm:3}), then taking the $\liminf$ as $p\to +\infty$, we get
$$
\int_{t_1}^{t_2} \|\partial_t u(t)\|_2^2\, dt\leq \varepsilon (t_2-t_1), \qquad \text{ and hence }\|\partial_t u(t)\|^2_2\leq \ep, \qquad \forall t\geq \bar t,
$$
which gives the statement.

\end{proof}

\begin{proof}[Proof of Theorem \ref{thm:2}] Fix $a>\lambda_1(\Omega)$. By taking $v=0$ in \eqref{eq:limit_problem_2} we obtain
$$
\int_\Omega |\nabla u(t)|^2\, dx\leq\int_\Omega (-\partial_t u(t) u(t) +  a u^2)\, dx,
$$
which implies that $\|u(t)\|_{H^1_0(\Omega)}$ is bounded for $t>0$. Therefore, up to a subsequence, we have $u(t)\rightharpoonup \bar u$ in $H^1_0(\Omega)$ as $t\to +\infty$. Given a subsequence $t_n\to +\infty$ such that $u(t_n)\rightharpoonup \bar u$, we know that
$$
\int_\Omega \partial_t u(t_n) (v-u(t_n))\, dx + \int_\Omega \nabla u(t_n)\cdot \nabla (v-u(t_n))\, dx \geq a\int_\Omega u(t_n)(v-u(t_n))\, dx, 
$$
for all $v\in \K$ which, together with Proposition \ref{prop:partial_t_vanishes}, implies that, as $p\to +\infty$,
\begin{equation*}
\int_\Omega \nabla \bar u\cdot \nabla (v-\bar u)\, dx\geq \int_\Omega a \bar u(v-\bar u)\, dx \qquad \forall v\in \K
\end{equation*}
or, equivalently,
$$
-\Delta \bar u = a \bar u \chi_{\{\bar u<1\}}. 
$$
(here we are using the equivalence between these two problems which was shown in \cite{DD} and was stated in the Introduction). Since $\|\bar u\|_\infty\leq 1$ and $a>\lambda_1(\Omega)$, in order to prove that $\bar u=w$ (the unique nontrivial solution of \eqref{eq:obstacle_elliptic_0}) the only thing left to prove is that $\bar u\not\equiv 0$.

2. Let us then check that, for $a>\lambda_1$, $\bar u\not\equiv 0$. Fix any $\bar t>0$. By the maximum principle we have that $u(\bar t,x)>0$ in $\Omega$ and $\partial_\nu u(\bar t,x)<0$ on $\partial \Omega$. By the convergence in $C^{1,0}_\alpha$--spaces \emph{up to the boundary of $\Omega$} (cf. Theorem \ref{thm:1}) we have that, for $p\geq \bar p$, $u_p(\bar t,x)>0$ in $\Omega$ and $\partial_\nu u_p(\bar t,x)<0$ on $\partial \Omega$. Let $\vphi_1$ be the first eigenfunction of the Laplacian in $H^1_0(\Omega)$ with $\vphi_1>0$ and $\|\vphi_1\|_\infty=1.$ Then
\begin{equation}\label{eq:aux1}
c\vphi_1\leq u_p(\bar t,x)\qquad \forall x\in \Omega,\ \forall p\geq \bar p
\end{equation}
for sufficiently small $c$ (independent of $p$). Observe moreover that $\partial_t (c\vphi_1)-\Delta (c\vphi_1)\leq a (c\vphi_1)-b(x)(c\vphi_1)^p$ if and only if 
\begin{equation}\label{eq:aux2}
b(x) c^{p-1}\vphi_1^{p-1}\leq (a-\lambda_1).
\end{equation}
Take $\bar c>0$ such that \eqref{eq:aux1} and \eqref{eq:aux2} hold. Then $\bar c \vphi_1$ is a subsolution of \eqref{eq:parabolic} for sufficiently small $\bar c$, for each $p\geq \bar p$. Then by Lemma \ref{lemma_subsuper} we have that $u_p(t,x)\geq \bar c\vphi_1$ for every $t\geq \bar t$ and $p\geq \bar p$, and hence as $p\to \infty$ also $u(t,x)\geq \bar c\vphi_1(x)$ for every $x\in \Omega$, $t\geq \bar t$.
Thus $\bar u\not\equiv 0$ and $\bar u=w$, the unique solution of \eqref{eq:obstacle_elliptic_0}. From the uniqueness we deduce in particular that $u(t)\rightharpoonup w$ in $H^1_0(\Omega)$ as $t\to \infty$, not only for some subsequence. As for the strong convergence, this is now easy to show since by taking the difference
$$
\partial_t u-\Delta (u(t)-w)=au(t)\chi_{\{u<1\}}-a w \chi_{\{w<1\}}
$$
and multiplying it by $u(t)-w$, we get
$$
\int_\Omega |\nabla (u(t)-w)|^2=-\int_\Omega \partial_t u(t)(u(t)-w)+(au(t)\chi_{\{u<1\}}-a w \chi_{\{w<1\}})(u(t)-w)\to 0
$$
as $t\to \infty$ (recall that both $u(t)$ and $w$ are less than or equal to 1). Thus $u(t)\to w$ strongly in $H^1_0(\Omega)$.

3. The convergence of the coincidence sets follows as in \cite{rodrigues}. As $0\leq \chi_{\{u=1\}}(t)\leq 1$, then there exists a function $0\leq \chi^\ast\leq 1$ such that, up to a subsequence, 
$$
\chi_{\{u=1\}}(t)\rightharpoonup \chi^\ast \qquad \text{ weak-}\ast \text{ in }L^\infty(\Omega), \text{ as } t\to +\infty.
$$ 
Since $\chi_{\{u=1\}} (1-u)=0$ a.e, then also $\chi^\ast (1-w)=0$ a.e. and hence $\chi^\ast=0$ whenever $w<1$. Moreover, from the fact that $\partial_t u-\Delta u=a u (1-\chi_{\{u=1\}})$ a.e. in $Q$ we deduce that $-\Delta w=aw(1-\chi^\ast)$. As $\Delta w=0$ a.e. on $\{w=1\}$ (in fact, $u\in W^{2,q}(\Omega)$ for every $q\geq 1$), we conclude that $\chi^\ast=1$ on $\{w=1\}$, whence $\chi^\ast=\chi_{\{w=1\}}$. Since in general $L^\infty(\Omega)$ weak-$\ast$ convergence of characteristic functions imply strong convergence in $L^q(\Omega)$ for every $q\geq 1$, we have proved \eqref{eq:charact_1}. As a consequence, actually $u(t)\to w$ in $H^2$--norm.

4. For $a<\lambda_1(\Omega)$, the function $0$ attracts all the solutions of \eqref{eq:limit_problem_2} with nonnegative initial data; in fact, by taking $v=0$ in \eqref{eq:limit_problem_2} we obtain
$$
\int_\Omega |\nabla u(t)|^2\, dx \leq a\int_\Omega u(t)^2\, dx-\int_\Omega \partial_t u(t) u(t)\, dx\leq \frac{a}{\lambda_1(\Omega)}\int_\Omega |\nabla u(t)|^2\, dx+\textrm{o}(1)
$$
as $t\to +\infty$, and thus $\|u(t)\|_{H^1_0(\Omega)}\to 0$. 

\end{proof}


\subsection{Proof of Theorem \ref{thm:4}}

Fix $a\in (\lambda_1(\Omega),\lambda_1(\Omega_0))$. In this case we have a stronger result than Lemma \ref{lemma:aprioiestimates}, having a uniform $L^\infty$ bound in $Q=\Omega\times \R^+$.

\begin{lemma}\label{lemma:apriori_final}
For $a\in (\lambda_1(\Omega),\lambda_1(\Omega_0))$, there exists $C>0$ such that $\|u\|_{L^\infty(\R^+\times \Omega)}\leq C$ for all $p>1$.
\end{lemma}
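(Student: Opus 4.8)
The plan is to produce a $p$-independent supersolution of \eqref{eq:parabolic} on all of $Q=\Omega\times\R^+$, so that Lemma \ref{lemma_subsuper} pins $u_p$ below it uniformly. The natural candidate comes from the degenerate \emph{stationary} theory: since $a<\lambda_1(\Omega_0)$, the elliptic problem \eqref{eq:elliptic} has a positive solution, and more to the point one expects a function $\overline w\in H^1_0(\Omega)\cap L^\infty(\Omega)$, bounded independently of $p$, solving (or supersolving) $-\Delta \overline w\geq a\overline w-b(x)\overline w^p$ in $\Omega$. I would first recall/establish the existence of such a stationary supersolution: on $\Omega_0$ (where $b\equiv 0$) one needs $-\Delta\overline w\geq a\overline w$, and here the hypothesis $a<\lambda_1(\Omega_0)$ is exactly what makes a bounded positive supersolution of $-\Delta v=av$ available on $\Omega_0$ — e.g. take $\overline w$ comparable to the first eigenfunction of $-\Delta$ on a slightly larger smooth domain $\widetilde\Omega_0$ with $\Omega_0\Subset\widetilde\Omega_0$ and $\lambda_1(\widetilde\Omega_0)>a$ still. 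Away from $\Omega_0$, where $b(x)\geq\underline b>0$ on compact subsets, the term $-b(x)\overline w^p$ helps, and for $\overline w$ large enough (but $p$-independent) the inequality $a\overline w-b(x)\overline w^p\leq 0\leq -\Delta\overline w$ can be arranged; near $\partial\Omega$ one matches the boundary condition using the torsion-type function or again an eigenfunction of a slightly larger domain.

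The key steps, in order, would be: (1) choose a fixed smooth domain $\widetilde\Omega$ with $\Omega\Subset\widetilde\Omega$ and an auxiliary smooth domain $\widetilde\Omega_0$ with $\Omega_0\Subset\widetilde\Omega_0\Subset\widetilde\Omega$, such that $\lambda_1(\widetilde\Omega_0)>a$; (2) build $\overline w$ as (a large multiple of) a smooth positive function that behaves like the first eigenfunction of $-\Delta$ on $\widetilde\Omega_0$ inside $\Omega_0$ and is bounded above by some constant $C$ everywhere, arranging $-\Delta\overline w\geq a\overline w$ on $\Omega_0$; (3) on $\Omega\setminus\Omega_0$, using (b2) and the boundedness of $\overline w$, check that for $C$ chosen large enough (independently of $p$, using $\overline w^p\geq \overline w$ once $\overline w\geq 1$ and $p>1$, together with $b\geq\underline b$ on compacta and a suitable treatment of a neighborhood of $\partial\Omega_0\cap\Omega$) one has $-\Delta\overline w\geq a\overline w-b(x)\overline w^p$; (4) verify $\overline w\geq u_0$ a.e. (possible since $u_0\in L^\infty$ and we are free to enlarge $C$, while on $\Omega\setminus\Omega_0$ we have $u_0\leq 1\leq\overline w$); (5) conclude $\overline w$ is a stationary, hence time-independent, supersolution of \eqref{eq:parabolic} with $\overline w\geq u_0$, so Lemma \ref{lemma_subsuper} gives $0\leq u_p(x,t)\leq\overline w(x)\leq C$ on all of $Q$, uniformly in $p>1$.

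The main obstacle is the construction in step (3) across the interface $\partial\Omega_0\cap\Omega$: on $\Omega_0$ the smallness condition $a<\lambda_1(\widetilde\Omega_0)$ drives the argument, but in the transition region the function $b$ may be very small (only a lower bound on compact subsets of $\Omega\setminus\overline\Omega_0$ is assumed, and $b$ need not be continuous), so one cannot simply rely on $-b(x)\overline w^p$ there. The fix is to design $\overline w$ so that $-\Delta\overline w\geq a\overline w$ holds not just on $\Omega_0$ but on a fixed open neighborhood of $\overline\Omega_0$ in $\Omega$ — concretely by taking $\overline w$ proportional to the first eigenfunction of $-\Delta$ on $\widetilde\Omega_0\supset\supset\Omega_0$ throughout that neighborhood — and then only invoking $b\geq\underline b>0$ on the complementary region, which is compactly contained in $\Omega\setminus\Omega_0$. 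One also has to be mindful that $\overline w$ need only be a supersolution in the weak/distributional sense, so mild non-smoothness at the gluing locus is acceptable provided the gradient jump has the favorable sign; alternatively one takes a smooth $\overline w$ and absorbs the error into the constant $C$. A further technical point is the boundary behavior: matching $\overline w|_{\partial\Omega}=0$ while keeping $-\Delta\overline w\geq a\overline w$ near $\partial\Omega$ requires $\partial\Omega$ to be ``seen'' by a domain of first eigenvalue $>a$, which holds because $\Omega\Subset\widetilde\Omega$ can be chosen with $\lambda_1(\widetilde\Omega)$ arbitrarily close to $\lambda_1(\Omega)<a$ from below only if we are careful — instead one uses that a boundary strip has large first eigenvalue, so near $\partial\Omega$ the inequality is automatic for any bounded $\overline w$, and this is where the smoothness of $\partial\Omega$ enters.
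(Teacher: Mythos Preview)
Your approach is essentially the paper's: build a $p$-independent \emph{stationary} supersolution from the first eigenfunction of a slight enlargement $\Omega_\delta\supset\supset\Omega_0$ with $\lambda_1(\Omega_\delta)>a$, so that $-\Delta\overline w\geq a\overline w$ holds on a full neighborhood of $\overline{\Omega_0}$, and then use $b\geq\underline b>0$ on the complementary compact set (contained in $\Omega\setminus\Omega_0$) to make the scaled function a supersolution there as well; comparison (Lemma~\ref{lemma_subsuper}) then gives the uniform bound.

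The one simplification in the paper that dissolves your final ``boundary behavior'' worry: there is no need to impose $\overline w|_{\partial\Omega}=0$. The paper simply extends $\phi_\delta|_{\Omega_{\delta/2}}$ to a smooth $\psi$ with $\min_{\overline\Omega}\psi>0$ and takes $\overline w=Q\psi$. This is admissible because the proof of Lemma~\ref{lemma_subsuper} only uses that $(u_p-\overline w)^+\in H^1_0(\Omega)$, which holds as soon as $\overline w\geq 0=u_p$ on $\partial\Omega$. So the boundary-matching obstacle you raised is self-imposed and can be dropped; with that, your steps (1)--(5) coincide with the paper's argument (which in turn refers to \cite{DDM1} for the details of choosing $Q$).
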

\begin{proof}
Here we follow the line of the proof of Claim 1 in \cite[p. 224]{DDM1}, to which we refer for more details. Define $\Omega_\delta=\{x\in \R^N: \dist(x,\Omega)<\delta\}$. Since $a<\lambda_1(\Omega_0)$, there exists a small $\delta$ such that $a<\lambda_1(\Omega_\delta)$ (by continuity of the map $\Omega\mapsto \lambda_1(\Omega)$). Denoting by $\phi_\delta$ the first eigenfunction of $-\Delta$ in $H^1_0(\Omega_\delta)$ and $\psi$ any extension of $\phi|_{\Omega_{\delta/2}}$to $\overline \Omega$ such that $\min_{\overline \Omega}\psi> 0$, there exists $Q>0$ large enough such that
$$
-\Delta (Q\psi)-aQ\psi+b(x)(Q\psi)\geq 0 \qquad \text{ in } \Omega,
$$
and $u_0\leq Q\psi$ in $\Omega$. Thus $Q\psi$ is a supersolution of \eqref{eq:parabolic} for all $p>1$ and by Lemma \ref{lemma_subsuper} we have
$$
u_p\leq Q\psi\leq M\qquad \text{ for all } (t,x)\in Q.
$$
\end{proof}

\begin{proof}[Proof of Theorem \ref{thm:4}]
1. Fix $a\in (\lambda_1(\Omega),\lambda_1(\Omega_0))$. Having proved Lemma \ref{lemma:apriori_final}, we can repeat the proof of Proposition \ref{prop:partial_t_vanishes} word by word and show that 
$$
\|\partial_t u(t)\|_{L^2(\Omega)}\to 0 \qquad \text{as } t\to +\infty.
$$
By making $v=0$ in \eqref{eq:limit_problem_2}, we obtain once again by Lemma \ref{lemma:apriori_final} that $\|u(t)\|_{H^1_0(\Omega)}$ is bounded for $t>0$. Take $t_n\to +\infty$ such that $u(t_n)\rightharpoonup \bar u$ in $H^1_0(\Omega)$ for some $\bar u\in H^1_0(\Omega)$. Then $\bar u\in \K_0$ and
\begin{equation}\label{eq:final}
\int_\Omega \nabla \bar u\cdot \nabla (v-\bar u)\, dx\geq a\int_\Omega \bar u(v-\bar u)\, dx,\qquad \forall v\in \K_0.
\end{equation}
In \cite{DD}, Dancer and Du have shown that \eqref{eq:final} has a unique nontrivial nonnegative solution $w$. In order to prove that $\bar u=w$ and conclude the proof for this case, we just have to show that $\bar u\not\equiv 0$. This will be a consequence of Theorem \ref{thm:2}. In fact, considering $\phi_p$ as the solution of
\begin{equation*}
\left\{
\begin{array}{l}
\partial_t \phi_p-\Delta \phi_p=a \phi_p -\|b\|_\infty \phi_p^p\qquad \text{ in } Q_T\\
u(0)=v_0, \quad u(t)|_{\partial \Omega}=0,
\end{array}
\right.
\end{equation*}
with $v_0:=\inf\{u_0,1\}$, it is straightforward to see that $\phi_p$ is a subsolution of \eqref{eq:parabolic}, and
$$
u_p\geq \phi_p\to w \qquad \text{ as } p\to +\infty,
$$
where $w\neq 0$ is the unique nontrivial solution of \eqref{eq:obstacle_elliptic_0}. This last statement is a consequence of Theorem \ref{thm:2}, as $0\leq v_0\leq 1$ a.e. in $\Omega$. Thus $\bar u\geq w\not\equiv 0$, which concludes the proof in this case.

2. If $a<\lambda_1(\Omega)$, the same reasoning as in the proof of Theorem \ref{thm:2} yields that $\|u(t)\|_{H^1_0(\Omega)}\to 0$. As for the case $a\geq \lambda_1(\Omega_0)$, if either $\|u(t)\|_\infty$ or $\|u(t)\|_{H^1_0(\Omega)}$ were bounded, it is clear from the proof of Proposition \ref{prop:partial_t_vanishes} that $\|\partial_t u(t)\|_{L^2(\Omega)}\to 0$. Repeating the reasoning of the previous step, we would obtain a nontrivial solution of \eqref{eq:limit_problem_2} for $a\geq \lambda_1(\Omega_0)$, contradicting \cite[Theorem 1.1]{DD}.
\end{proof}

\begin{rem}\label{remark:a_leq_lambda_1}
As for the case $a=\lambda_1(\Omega)$, observe that $c\vphi_1$ is always a steady state solution of \eqref{eq:limit_problem_1} for all $0<c<1$, where $\vphi_1$ denotes the first eigenfunction of $(-\Delta, H^1_0(\Omega))$ with $\|\vphi_1\|_\infty=1$. Hence the long time limit of \eqref{eq:limit_problem_2} in this case will depend on the initial condition $u_0$, and we are only able to conclude that given $t_n\to +\infty$ there exists a subsequence $\{t_{n_k}\}$ such that $u(t_{n_k})$ converges to $c\vphi_1$ for some $c>0$.
\end{rem}

\noindent{\bf Acknowledgements.} The second author would like to thank Jesús Hernández and Pedro Freitas for useful discussions related with this paper.

\end{document}